\documentclass{amsart}
\usepackage{amsmath,amssymb,graphicx,amsthm,bm,hyperref,mathrsfs,tikz}
\usepackage{graphicx}
\usepackage{hyperref}
\hypersetup{colorlinks=true,citecolor=blue,linkcolor=cyan}
\vfuzz2pt 
\hfuzz2pt 
\newtheorem{thm}{Theorem}[section]

\newtheorem{lem}[thm]{Lemma}

\theoremstyle{definition}
\newtheorem{defn}[thm]{Definition}
\theoremstyle{remark}

\theoremstyle{question}
\newtheorem{question}[thm]{Question}
\numberwithin{equation}{section}

\newcommand{\Ff}{\mathbb{F}}
\newcommand{\Z}{\mathbb{Z}}

\newcommand{\M}{\mathcal{M}}
\newcommand{\Ll}{\mathscr{L}}
\newcommand{\A}{\mathcal{A}}

\title[Multiplication Table Problem]{Erd\H{o}s' Multiplication Table Problem for Function Fields and Symmetric Groups}
\author{Patrick Meisner}
\address{School of Mathematical Science, Tel Aviv University, Ramat Aviv, Tel Aviv, 6997801, Israel }
\email{meisner@mail.tau.ac.il}

\begin{document}

\begin{abstract}

Erd\H{o}s first showed that the number of positive integers up to $x$ which can be written as a product of two number less than $\sqrt{x}$ has zero density. Ford then found the correct order of growth of the set of all these integers. We will use the tools developed by Ford to answer the analogous question in the function field setting. Finally, we will use a classical result relating factorization of polynomials to factorization of permutations to recover a result of Eberhard, Ford and Green of an analogous multiplication table problem for permutations.

\end{abstract}

\maketitle

\section{Introduction}\label{Intro}

Let $A(x)$ be the set of positive integers up to $x$ that can be written as a product of two numbers less than $\sqrt{x}$. Using estimates on the number of integers with a given number of prime divisors Erd\H{o}s \cite{E} was able to show that
$$|A(x)| \ll \frac{x}{(\log x)^{\delta} (\log\log x)^{1/2}},$$
where
$$\delta = 1 - \frac{1+\log\log2}{\log2} = 0.086071....$$
Much later, Ford \cite{F1,F2} considered the set $H(x,y,z)$ consisting of the number of integers up to $x$ which has a divisor in $(y,z]$. In particular, he showed that
\begin{align}\label{Hxy2y}
|H(x,y,2y)| \asymp \frac{x}{(\log y)^\delta (\log\log y)^{3/2}} \quad \quad (3 \leq y \leq \sqrt{x}),
\end{align}
and that
$$\left|H\left(\frac{x}{4}, \frac{\sqrt{x}}{4}, \frac{\sqrt{x}}{2}\right)\right| \leq |A(x)| \leq \sum_{k\geq 0} \left|H\left(\frac{x}{2^k}, \frac{\sqrt{x}}{2^{k+1}}, \frac{\sqrt{x}}{2^k}\right)\right|$$
from which you can then conclude that
\begin{align}\label{A(x)}
|A(x)| \asymp  \frac{x}{(\log x)^\delta (\log\log x)^{3/2}}.
\end{align}
Here we use the notation that $f(x) \ll g(x)$ if there is a constant $C$ and $X>0$ such that $|f(x)|\leq |g(x)|$ for all $x\geq X$. Further, we write $f(x) \asymp g(x)$ to mean $f(x) \ll g(x)$ and $g(x)\ll f(x)$.

Several authors have generalized this problem to various other settings. Koukoulopoulos \cite{K1,K2,K3} considered the number of integers up to $x$ that can be written as a product of $k$ different integers in certain intervals, the so-called generalized multiplication table problem. Eberhard, Ford and Green \cite{EFG} considered an analogous problem for permutations in the symmetric group (see Section \ref{symgroup} for further discussion) while the first two authors with Koukoulopolous \cite{EFK} looked at the generalized multiplication table problem for the symmetric group. Finally, Mangerel proved the analogous statement for arithmetical semigroups that satisfy an ``$\alpha$-prime element theorem" (see \cite{M} for more details). We are interested, however, in the analogous statement in the function field setting.

\subsection{Function Field Analogy}

There is a dictionary of sorts that relates statements made about integers to statements about polynomials over finite fields:
\begin{center}
\begin{tabular} { c| c}
$\Z$ & $\Ff_q[t]$ \\
\hline
$p$, prime & $P$, prime polynomial \\
positive & monic \\
$|m|$ & $|F|=q^{\deg F}$ \\
$m\leq x$ & $\deg F=n$ \\
$\log x$ & $\deg F$
\end{tabular}
\end{center}

Therefore, we can make statements about function fields that are analogous to statements in the integers by replacing the appropriate ``words". For example the prime number theorem states that the number of primes up to $x$ is
$$\pi(x) := |\{p \leq x : p \mbox{ is prime}\}| \sim \frac{x}{\log(x)}.$$
The analogous question, the prime polynomial theorem, asks how many prime polynomials over $\Ff_q[t]$ are there of degree $n$ with the answer being
$$\pi_q(n) := |\{P\in \M_n : P \mbox{ is a prime polynomial}\}| = \frac{q^n}{n} + O\left(\frac{q^{n/2}}{n}\right),$$
where $\M_n$ is the set of monic polynomials of degree $n$. Note that we get a square-root saving in the error term for the prime polynomial theorem as the Riemann Hypothesis is known for function fields.

Using this dictionary we can create analogous sets to $A(x)$ and $H(x,y,2y)$ in the function field setting. The background set for $A(x)$ is all the positive integers less than $x$ so the background set in the function field setting would be $\M_n$, the monic polynomials of degree $n$. Since degree is the analogy of $\log$, the condition of being the product of two integers less than $\sqrt{x}$ in $A(x)$ is analogous to being the product of two polynomials of degree $n/2$. Clearly this only makes sense if $n$ is even and so we define
\begin{align}\label{M2n}
M(2n) := \{ F\in\ \M_{2n} : F=GH, G,H\in \M_n \}.
\end{align}
Then the multiplication table problem would be to find the size of set $M(2n)$. Using the dictionary we can make a good guess as to how large the set should be and in fact that is what we get.

\begin{thm}\label{MainThm1}
$$|M(2n)|\asymp \frac{q^{2n}}{n^\delta (1+\log n)^{3/2}}$$
as $q^n\to\infty$.
\end{thm}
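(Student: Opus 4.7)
The plan is to transport Ford's proof of \eqref{Hxy2y} to the function field setting via the dictionary of the excerpt. First observe that $F \in M(2n)$ if and only if $F \in \M_{2n}$ has a monic divisor of degree exactly $n$; so, writing
$$H_q(m,k) := \{F \in \M_m : F \text{ has a divisor in } \M_k\},$$
we have $M(2n) = H_q(2n, n)$, the polynomial analogue of $H(x, \sqrt{x}, \sqrt{x})$. If $F = P_1^{a_1}\cdots P_r^{a_r}$ with the $P_i$ distinct monic irreducibles, then $F \in M(2n)$ precisely when some sub-multiset of the degrees $\deg P_i$ (each $\deg P_i$ used at most $a_i$ times) sums to $n$. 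This reduces the problem to a subset-sum question on the multiset of prime-factor degrees of $F$, which is exactly the starting point of Ford's analysis.

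I would then execute a three-part strategy adapted to $\Ff_q[t]$. \textbf{(i) Anatomy.} Restrict attention to $F$ whose number of irreducible factors lies in a narrow window around $\log(2n) \sim \log n$, using the function-field Hardy--Ramanujan / Erd\H{o}s--Kac phenomenon; polynomials outside this window contribute negligibly. \textbf{(ii) Main estimate.} Using the function-field Sathe--Selberg formula (counting polynomials of degree $m$ with exactly $k$ irreducible factors, derivable from the prime polynomial theorem together with its square-root error) and the joint distribution of irreducible factors of each degree, estimate the density of those $F \in \M_{2n}$ whose degree-multiset admits a subset summing to $n$. The Erd\H{o}s exponent $\delta$ emerges from the same combinatorial optimization over weights of prime-degree classes as in $\Z$, and Ford's entropy refinement yields the additional $(\log n)^{-3/2}$ factor. \textbf{(iii) Lower bound.} The matching lower bound is obtained in parallel by exhibiting sufficiently many $F$ with a balanced factorization pattern that automatically produces a subset-sum of $n$.

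The main obstacle is step (ii): Ford's integer proof rests on an intricate anatomy of divisors near a given scale, and while each of its inputs has a clean function-field counterpart (sometimes cleaner, thanks to the square-root savings from Riemann-type bounds), care is needed to propagate error terms uniformly in the regime $q^n \to \infty$. This regime interpolates between the classical case ($q$ fixed, $n \to \infty$), which most closely parallels Ford's original setting, and the large-$q$ limit, where the factorization of a random $F \in \M_{2n}$ converges to the cycle structure of a uniformly random permutation in $S_{2n}$. In that limit the theorem becomes a statement about random permutations having a subset of cycles of total length $n$, which serves both as a consistency check and, as the abstract suggests, the gateway to the symmetric-group results proved later in the paper.
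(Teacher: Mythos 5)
Your reduction $M(2n) = H_q(2n,n)$ and the plan to transport Ford's machinery via the dictionary is exactly what the paper does: Theorem \ref{MainThm1} is stated to be an immediate corollary of the more general Theorem \ref{MainThm2}, which establishes $|H(n,b)| \asymp q^n/(b^\delta(\log b)^{3/2})$ for all $b\leq n/2$, and the body of the paper is devoted to that theorem. So the framing is on target.

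Beyond the reduction, however, the proposal is a strategy sketch rather than a proof, and its description of Ford's method in steps (i)--(iii) does not match the actual mechanism. Ford's order-of-magnitude result does not come primarily from a Sathe--Selberg count of $\omega(F)$ combined with a subset-sum density estimate; the engine is the divisor-clustering functional, which here becomes $L(A)=|\{\deg D : D\mid A\}|$. The lower bound in the paper reduces $|H(n,b)|$ to $\sum_{\deg A\leq b/8}L(A)/|A|$, converts $L(A)$ into a ratio of $\tau(A)$ and the divisor-pair count $W(A)$ via Cauchy--Schwarz, builds sets $\A(\mathbf b)$ from a partition of the primes into blocks $D_j$ normalized so that $\sum_{P\in D_j}|P|^{-1}\approx\log 2$, and finally invokes Ford's combinatorial estimate $\sum_{\mathbf b}(b_M!\cdots b_J! f(\mathbf b))^{-1}\gg k^{-3/2}$, which is the actual source of the $(\log n)^{-3/2}$ factor. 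The upper bound proceeds by first treating the squarefree case $H^*(n,b)$, decomposing $F=ABP$ around the largest prime of a degree-$b$ divisor, and bounding the resulting sums $S(d)$, $T(d,m)$, $T_k(d,m)$, then lifting to general $F$ via squarefull estimates. None of this is executed; you correctly identify step (ii) as the obstacle and stop there, so the entire technical core of the argument is absent. The remark about the large-$q$ limit and random permutations is accurate and indeed appears in Section \ref{Section2}, but it is not part of the proof of this theorem.
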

Notice that since $n$ replaces $\log x$, $\log n$ replaces $\log\log x$. Moreover, we have a $(1+\log(n))^{3/2}$ in the denominator to correct for when $n=1$.

The analogy for $H(x,y,2y)$ is a little subtler. We must ask ourselves what is the correct analogue of $2$ and the importance it plays in the proof of \eqref{Hxy2y}. In fact $2$ is important in this context because it is the smallest prime. While the concept of a smallest prime is not well defined in the function field setting, the degree of the smallest prime is well defined, and it is $1$. Therefore, the analogue of a number having a divisor in $(y,2y]$ would be for a polynomial to have a divisor with degree in $(b,b+1]$. But since the degree is always an integer we see that this is equivalent to saying a polynomial has a divisor of some fixed degree. Thus we define
\begin{align}\label{Hnb}
H(n,b) := & \{F \in \M_n: F \mbox{ has a divisor of degree } b\} \\
 = & \{F\in \M_n : F=GH, G\in \M_b, H\in \M_{n-b}\}. \nonumber
\end{align}
Moreover, we see that $H(n,b)=H(n,n-b)$ so we may always assume that $b\leq n/2$. Again, the result predicted by the dictionary is the truth.

\begin{thm}\label{MainThm2}
For $b\leq n/2$,
$$|H(n,b)| \asymp \frac{q^{n}}{b^\delta (1+\log b)^{3/2}}$$
as $q^n\to\infty$.
\end{thm}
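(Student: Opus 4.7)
The plan is to adapt Ford's strategy for the integer statement \eqref{Hxy2y} to the function field setting. The key observation is that $F \in H(n,b)$ if and only if some sub-multiset of the irreducible factors of $F$ has degrees summing to exactly $b$; this reduces the question to an integer-valued subset-sum problem on the degree profile of a random monic polynomial.

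For the upper bound, I would decompose $H(n,b)$ according to the irreducible factorization profile of $F$. The function field analogue of Hardy--Ramanujan gives that the typical $F \in \M_n$ has $\omega(F) \approx \log n$ distinct irreducible factors, with the count decaying in a Poisson-like fashion away from the mean. Given such a profile, counting polynomials $F$ for which some subset of the factor degrees sums exactly to $b$ reduces to a large-deviation estimate for integer-valued subset sums: the exponent $\delta$ arises from the Legendre transform governing the probability of such a subset existing, while a local-limit refinement contributes the $(\log b)^{-3/2}$ factor. Dyadic decomposition according to the sizes of the prime factors, together with summation over profiles, then yields the claimed upper bound.

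For the lower bound, I would construct a large family of elements of $H(n,b)$ explicitly. Fix collections of irreducible polynomials whose degrees are distributed across appropriate dyadic ranges, and consider $F = GH$ where $G$ ranges over products of subsets of these primes with total degree $b$ and $H$ ranges freely over $\M_{n-b}$. Counting pairs $(G,H)$ is easy; the nontrivial step is to convert this into a lower bound on the number of \emph{distinct} polynomials $F$, which requires controlling the second moment of the number of degree-$b$ divisors of $F$ to bound the overcounting.

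The main obstacle will be obtaining the sharp $(\log b)^{-3/2}$ factor with matching constants in the upper and lower bounds. This requires a precise local limit theorem for the subset-sum problem on prime degrees, paralleling Ford's delicate anatomy-of-integers analysis. The function field setting offers one simplification (prime degrees are already integer-valued, so no truncation is needed to pass from $\log$-sums to integer sums) but also a complication: many primes share the same degree, so the nested divisor constructions that underlie Ford's lower bound must be reworked combinatorially, with multiplicities carefully tracked via the prime polynomial theorem.
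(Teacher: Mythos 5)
Your high-level roadmap points in the same general direction as the paper (which, like you, adapts Ford's machinery), and your lower-bound sketch is in fact close to what actually happens: the paper constructs $F$ from primes distributed across degree blocks $D_j$ with $\sum_{\deg P\in D_j}1/|P|\le \log 2$, and the ``second moment of degree-$b$ divisors'' you invoke is precisely the quantity $W(A)=\sum_d \tau_d(A)^2$ used with Cauchy--Schwarz in \eqref{lowboundeq1}. So that half of your plan is a reasonable description of the actual argument.

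The serious gap is in the upper bound, and specifically in your account of where the $(\log b)^{-3/2}$ comes from. Decomposing by the number $\omega(F)$ of prime factors and applying a Selberg--Sathe/Hardy--Ramanujan-type local limit theorem is exactly the level of analysis Erd\H{o}s used, and it only yields the weaker exponent $(\log b)^{-1/2}$; it cannot by itself produce the extra power of $\log b$. The extra factor is not a ``local-limit refinement'' of the count of $\omega(F)$ at all. It comes from a genuinely different quantity: Ford's $L(A)$, which in the function field setting is $L(A)=|\{\deg D : D\mid A\}|$, the number of \emph{distinct divisor degrees}. The entire upper bound in the paper runs through Lemma~\ref{sqfreelem}, which bounds $|H^*(n,b)|$ by $q^n(S(b)+S(n-b))$ with $S(d)$ a sum of $L(A)/|A|$ weighted by $(\deg(P^+(A))+d-\deg(A))^{-2}$, and then through Lemma~\ref{Tklem}, which controls $\sum_A L(A)/|A|$ via the structural bound $L(P_1\cdots P_k)\le\min_j 2^{k-j}\deg(P_1\cdots P_j)$ (Lemma~\ref{Upboundlem1}(3)) and a non-trivial combinatorial estimate of Ford on $\sum_{\mathbf j}F(\mathbf j)\int_{R(\mathbf j)}d\xi$. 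That last step, not a local CLT, is what produces the $1/(k+1)!$ and $2^{-|k-v|}$ decay and hence the $(\log d)^{-3/2}$. Your sketch never introduces $L(A)$ or its key submultiplicativity properties, so as written it would reproduce only Erd\H{o}s's density-zero bound, not the sharp order of magnitude.

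Two smaller points. First, the ``Legendre transform'' heuristic is a fine way to remember $\delta=1-(1+\log\log 2)/\log 2$, but in the actual proof $\delta$ emerges from the ratio of $(2\log d)^v/(v+1)!$ to $d^2$ with $v=\lfloor\log_2 d\rfloor$ after Stirling, i.e.\ from the same $L(A)$-weighted sums rather than from a separate large-deviation step. Second, the ``complication'' you flag about many primes sharing a degree is not really an obstacle: the paper (like Ford) works with blocks of primes chosen so each block has total reciprocal weight about $\log 2$, and in the function field case this actually simplifies matters since $\deg$ is already integer-valued; there is no extra combinatorial reworking needed beyond what Ford already does.
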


Of course $M(2n)=H(2n,n)$ so Theorem \ref{MainThm1} is a direct corollary of Theorem \ref{MainThm2}.

\subsection{Symmetric Groups}\label{symgroup}

Let $S_n$ be the symmetric group on $n$ elements and define
$$I(n,b) := \{\sigma \in S_n : \sigma \mbox{ fixes some subset of size } b\}.$$

Eberhard, Ford and Green \cite{EFG} adapted the methods of Ford in \cite{F1,F2} to show that
\begin{thm}\label{EFG}
For $b\leq n/2$,
$$|I(n,b)| \asymp \frac{n!}{b^\delta(1+\log b)^{3/2}}$$
as $n\to\infty$.
\end{thm}

As well as the analogy between integers and polynomials over a finite field, there is an analogy between polynomials over a finite field of degree $n$ and the symmetric group on $n$ elements. In particular, one can show that, in the $q$-limit, the probability a polynomial has a given factorization into prime polynomials is the same as the probability a permutation has the same factorization type into cyclic elements. Through this analogy we can relate the relative size of $I(n,b)$ to the relative size of $H(n,b)$.

\begin{thm}\label{RelThm}

$$\lim_{q\to\infty} \frac{|H(n,b)|}{q^n} = \frac{|I(n,b)|}{n!}.$$

\end{thm}

The proof of Theorem \ref{RelThm} is independent of Theorems \ref{MainThm2} and \ref{EFG}. Therefore Theorems \ref{EFG} and \ref{RelThm} imply Theorem \ref{MainThm2} for $n$ fixed and $q$ tending to infinity. However, the proof we give here of Theorem \ref{MainThm2} is independent of Theorem \ref{EFG} and is valid for $q^n$ tending to infinity in any way (in particular, for $q$ fixed and $n$ tending to infinity). Hence we get a new proof of Theorem \ref{EFG}.

Define these two properties of permutations on $S_n$:
\begin{defn}

We say $\sigma,\tau\in S_n$ are \textbf{disjoint} if they permute different elements. That is, if $\sigma(k)\not=k$ then $\tau(k)=k$ and, vice versa, if $\tau(k)\not=k$ then $\sigma(k)=k$.

\end{defn}

\begin{defn}

We say $\sigma\in S_n$ \textbf{embeds into $S_m$} if there is a subset $I\subset \{1,\dots,n\}$ of size $m$ such that $\sigma$ permutes $I$ and is trivial outside of $I$. That is, $\sigma(k)\in I$ for all $k\in I$ and $\sigma(k)=k$ for all $k\not \in I$.

\end{defn}

Then we see that $I(n,b)$ has an equivalent definition
\begin{align}\label{Inb}
I(n,b) := \{\sigma \in S_n : \sigma = \tau_1\tau_2 \mbox{ such that $\tau_1,\tau_2$ are disjoint and $\tau_1$ embeds into $S_b$}\}.
\end{align}

In this way we see that $I(2n,n)$ is a reasonable analogue of the multiplication table set in $S_{2n}$. However, Theorem \ref{EFG} is then surprising as one would expect from \eqref{A(x)} and Theorem \ref{MainThm2} that the multiplication table set of $S_{2n}$ would have size roughly
\begin{align*}
\frac{|S_{2n}|}{(\log |S_{2n}|)^{\delta} (1+\log\log |S_{2n}|)^{3/2}} & = \frac{(2n)!}{(\log((2n)!))^{\delta} (1+\log\log((2n)!))^{3/2}} \\
&  \asymp \frac{(2n)!}{(n\log n)^{\delta} (1+\log n)^{3/2}}
\end{align*}
So this raises the question:

\begin{question}
Does there exist a different (more reasonable) analogue of the multiplication table set in $S_{2n}$ that has size roughly like the above equation?
\end{question}

\textbf{Outline of the paper:} In Section \ref{Section2} we will prove Theorem \ref{RelThm}. Then Sections \ref{Section3} and \ref{Section4} will be devoted to proving the lower and upper bounds for Theorem \ref{MainThm2}, respectively. We will use the techniques developed by Ford to reduce the question down to the same estimates as for the integer case. Finally, we include an appendix with proofs of function field analogues of well known useful results in the integer setting.

We will preserve the variable $P$ (with any subscript) to denote a prime polynomial. Moreover, for brevity, if we write a sum (or product) with $P$ in the subscript, we will always have this denote the sum (or product) over prime polynomials that satisfy the other conditions imposed by the sum (or product).

\textbf{Acknowledgements:} I would like to thank Ben Green, Dimitris Koukoulopoulos and Sacha Mangerel for bringing to my attention some of the literature of the field. I would also like to thank Ofir Gorodetsky for pointing out some inaccuracies in the appendix in an earlier version.

The research leading to these results has received funding from the European Research Council under the European Union's Seventh Framework Programme (FP7/2007-2013) / ERC grant agreement n$^{\text{o}}$ 320755.

\section{Symmetric Groups}\label{Section2}

Let $F\in \M_n$. Suppose it can be factored as $F = \prod_{i=1}^t P_i$ where the $P_i$ are not necessarily distinct primes. Then the tuple $(\deg P_1,\dots,\deg P_t)$ gives a partition of $n$. Denote this partition as $\lambda_F$. Further, for any partition $\lambda$ of $n$, define
$$\pi_q(n,\lambda) = |\{F\in \M_n : \lambda_F=\lambda\}|$$
to be the number of polynomials of degree $n$ with a fixed factorization type. Note that if we set $\lambda = (n)$, the partition consisting only of $n$, then we see that $\pi_q(n,(n))=\pi_q(n)$, the number of primes of degree $n$.

Likewise, all $\sigma\in S_n$ can be decomposed as $\sigma = \prod_{i=1}^t c_i$ where the $c_i$ are disjoint cycles. Then the tuple $(\ell(c_1),\dots,\ell(c_t))$ gives a partition of $n$, where $\ell(c_i)$ is the length of $c_i$. Denote this partition $\lambda_\sigma$. Note: if $\sigma(k)=k$, then we include the cycle $(k)$ in the decomposition of $\sigma$ and this contributes a $1$ to the partition of $n$. Now, for any partition $\lambda$ of $n$, define
$$P(\lambda) = \frac{|\{\sigma\in S_n : \lambda_\sigma = \lambda\}|}{n!}$$
to be the probability that a permutation has a certain cycle decomposition. Then there is a classical result that follows directly from the prime polynomial theorem:

\begin{thm}\label{BBR}[Lemma 2.1 of \cite{ABR}]

Let $n$ be a positive integer. Then there exists a $c(n)>0$ depending only on $n$ such that
$$|\pi_q(n,\lambda) - P(\lambda)q^n| \leq c(n) q^{n-1}.$$

\end{thm}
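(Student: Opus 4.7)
The plan is to get an explicit formula for $\pi_q(n,\lambda)$ and then plug in the prime polynomial theorem.

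First, I would count $\pi_q(n,\lambda)$ directly. Write $\lambda$ in frequency notation as $1^{m_1} 2^{m_2}\cdots n^{m_n}$, so $\sum_d d\, m_d = n$. A monic polynomial of factorization type $\lambda$ is obtained by choosing, for each $d$, a multiset of $m_d$ primes of degree $d$. Since there are $\pi_q(d)$ such primes, the number of multisets is $\binom{\pi_q(d)+m_d-1}{m_d}$, giving
\begin{equation*}
\pi_q(n,\lambda) \;=\; \prod_{d=1}^{n} \binom{\pi_q(d)+m_d-1}{m_d}.
\end{equation*}

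Next I would expand each binomial coefficient as a polynomial in the variable $\pi_q(d)$ of degree $m_d$, whose leading term is $\pi_q(d)^{m_d}/m_d!$ and whose remaining terms are $O(\pi_q(d)^{m_d-1})$, with constants depending only on $n$. Multiplying through, the only contribution of size $q^n$ comes from the product of leading terms, while every lower-order term loses at least one factor of $\pi_q(d)\asymp q^d$ for some $d$ with $m_d\geq 1$, and so is bounded by $c(n)q^{n-1}$.

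Now I would apply the prime polynomial theorem, writing $\pi_q(d)=q^d/d+E_d$ with $|E_d|\leq c_d\, q^{d/2}$ for $d\geq 2$ and $E_1=0$ (since every linear monic polynomial is prime). Expanding $\pi_q(d)^{m_d}$ as $(q^d/d)^{m_d}+\sum_{k\geq 1}\binom{m_d}{k}(q^d/d)^{m_d-k}E_d^k$, every non-leading term contributes at most $O(q^{dm_d-d/2})$. Since this error is only produced when $d\geq 2$ (indices $d=1$ being exact), the combined error across the product is $O(q^{n-d/2})$ for some $d\geq 2$, hence $O(q^{n-1})$, with implicit constant depending only on $n$.

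Finally, I would identify the main term $\prod_d \frac{(q^d/d)^{m_d}}{m_d!} = q^n\prod_d \frac{1}{d^{m_d} m_d!}$ with $P(\lambda)q^n$, using the classical formula that the number of permutations in $S_n$ with cycle type $\lambda$ equals $n!/\prod_d d^{m_d} m_d!$. Combining both sources of error completes the proof. The only real subtlety is ensuring the errors do not exceed $q^{n-1}$; this works precisely because the PPT is exact for $d=1$, so the error from using the main term of the PPT is $O(q^{n-1})$ rather than $O(q^{n-1/2})$.
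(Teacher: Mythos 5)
The paper states Theorem~\ref{BBR} without proof, attributing it to the classical literature, so there is no in-text argument for you to match. Your proof is correct and self-contained. The two key ingredients are exactly right: first, the exact combinatorial identity
$$\pi_q(n,\lambda)=\prod_{d=1}^n\binom{\pi_q(d)+m_d-1}{m_d}$$
(choosing a multiset of $m_d$ monic primes of each degree $d$, stars-and-bars), which makes $\pi_q(n,\lambda)$ a polynomial in $q$ with integer powers, degree $n$ and leading coefficient $\prod_d (d^{m_d}m_d!)^{-1}=P(\lambda)$; and second, the observation that $\pi_q(1)=q$ exactly, which is what prevents the error from being of size $q^{n-1/2}$. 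Each binomial $\binom{x+m-1}{m}$ is a degree-$m$ polynomial in $x$ with coefficients bounded in terms of $m\leq n$, so the lower-order terms of the product lose at least a full factor of $\pi_q(d)\asymp q^d\geq q$; and substituting the prime polynomial theorem $\pi_q(d)=q^d/d+O(q^{d/2})$ for $d\geq 2$ costs at least $q^{d/2}\geq q$. Since there are only finitely many partitions of $n$, one may take $c(n)$ to be the maximum over $\lambda$, as you implicitly do. A marginally slicker phrasing of the same argument is to note directly that $\pi_q(d)=\frac1d\sum_{e\mid d}\mu(e)q^{d/e}$ is a polynomial in $q$ with integer exponents, so $\pi_q(n,\lambda)$ is a polynomial in $q$ of degree exactly $n$ with leading coefficient $P(\lambda)$, whence the claim is immediate; but your expansion amounts to the same thing.
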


We can now use this result to prove Theorem \ref{RelThm}.

\begin{proof}[Proof of Theorem \ref{RelThm}]

We will say $\lambda$ has a $b$-subpartition if there exists a subset of $\lambda$ that is a partition of $b$. Therefore $F\in H(n,b)$ if and only if $\lambda_F$ has a $b$-subpartition. Indeed if $F\in H(n,b)$ then $F=GH$ with $G\in\M_b$ and $\lambda_G$ will be a $b$-subpartition of $\lambda_F$. Conversely, if $\lambda'$ is a $b$-subpartition of $\lambda_F$, then define $G$ to be the product of the primes of $F$ corresponding to $\lambda'$. Then $G|F$ and $G\in\M_b$ and hence $F\in H(n,b)$.

Let
$$\Lambda(n,b) := \{\lambda: \lambda \mbox{ is a partition of $n$ with a $b$-subpartition} \}.$$
Then we get that
$$H(n,b) = \bigcup_{\lambda\in\Lambda(n,b)} \{F\in \M_n : \lambda_F=\lambda\}.$$
Moreover, this union is disjoint as if $\lambda_F\not=\lambda_G$ then $F\not=G$. Therefore,
\begin{align*}
|H(n,b)| & = \sum_{\lambda\in\Lambda(n,b)}|\{F\in M_n: \lambda_F = \lambda\}|\\
& = \frac{q^n}{n!}\sum_{\lambda\in\Lambda(n,b)}| \{\sigma\in S_n : \lambda_\sigma=\lambda\}| + O\left(c(n)q^{n-1}e^{\pi \sqrt{2n/3}}\right)
\end{align*}
where the last equality comes from Theorem \ref{BBR} and bounds on the number of partition of $n$ as proved by Hardy and Ramanujan \cite{HR}.

Furthermore, $\sigma\in I(n,b)$ if and only if $\lambda_\sigma\in \Lambda(n,b)$. Indeed if $\sigma\in I(n,b)$ then, using the second definition of $I(n,b)$ in the introduction, $\sigma=\tau_1\tau_2$ with $\tau_1$ and $\tau_2$ disjoint and $\tau_1$ embeds into $S_b$ therefore, $\lambda_{\tau_1}$ will be a $b$-subpartition of $\lambda_\sigma$. Conversely, if $\lambda_\sigma$ has a $b$-subpartition then let $\tau_1$ be the product of the cycles corresponding to the subpartition and $\tau_2$ be the product of the remaining cycles. Then $\tau_1$ will embed into $S_b$, $\tau_1$ and $\tau_2$ will be disjoint and $\sigma=\tau_1\tau_2$.

Therefore, we get
$$I(n,b) = \bigcup_{\lambda\in\Lambda(n,b)} \{\sigma\in S_n: \lambda_\sigma=\lambda\}$$
and since this union is disjoint (as $\lambda_\sigma\not=\lambda_\tau$ implies $\sigma\not=\tau$) then we finally have
\begin{align*}
\frac{|I(n,b)|}{n!} & = \sum_{\lambda\in\Lambda(n,b)}| \{\sigma\in S_n : \lambda_\sigma=\lambda\}| \\
& = \frac{|H(n,b)|}{q^n} +  O\left(\frac{c(n)e^{\pi \sqrt{2n/3}}}{q}\right).
\end{align*}
Finally, if we let $q$ tend to infinity, then the big-$O$ term will go to zero.

\end{proof}

\section{Lower Bound}\label{Section3}

In Ford's proof for the integers, he expresses the size of $H(x,y,2y)$ in terms of ``a measure of the degree of clustering of the divisors of an integer $a$" which he defines as
$$L(a) = \mbox{meas}\Ll(a), \quad \quad \quad \Ll(a) = \bigcup_{d|a}[\log(d/2), \log d).$$
Again, here the importance of $2$ is just that it is the smallest prime integer. The analogue of  $\log 2$ in the function field setting is then just the degree of the smallest prime, which is $1$. Hence, for a polynomial $A$ and a divisor $D$ of $A$, the corresponding interval we will want to consider is something of the form $[\deg(D)-1,\deg(D))$. However, since the $\deg$ function only takes integer values, we actually only care about the singleton $\deg(D)$. Hence, we will define
$$\Ll(A) = \{d : d=\deg(D) \mbox{ for some } D|A\}, \quad \quad \quad L(A)=|\Ll(A)|.$$

\begin{lem}\label{lowboundlem1}
For $b\leq n/2$,
$$|H(n,b)| \gg \frac{q^n}{b^2} \sum_{\deg(A)\leq b/8} \frac{L(A)}{|A|}$$
as $q^n\to\infty$.
\end{lem}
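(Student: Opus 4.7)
My plan is to mirror Ford's approach in the integer setting by constructing a large family of polynomials in $H(n,b)$ parameterized by quadruples $(A,d,P,M)$, where $A \in \M_m$ with $0 \leq m \leq b/8$ is the ``small'' factor, $d \in \Ll(A)$ indexes a degree of a divisor of $A$, $P$ is a prime polynomial of degree $b-d$, and $M \in \M_{n-m-(b-d)}$ is a polynomial whose prime factors all have degree strictly greater than $b-d$. For each such quadruple the polynomial $F := APM$ lies in $\M_n$; choosing any divisor $D$ of $A$ with $\deg D = d$ (which exists because $d \in \Ll(A)$) exhibits $DP$ as a divisor of $F$ of degree exactly $b$, placing $F \in H(n,b)$.

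The key technical step will be to show that this parameterization is injective, so that $|H(n,b)|$ is bounded below by the raw number of valid quadruples. Suppose two quadruples $(A_i, d_i, P_i, M_i)$ for $i = 1, 2$ produce the same $F$, and without loss of generality $d_1 \leq d_2$. Then $P_2 \mid F = A_1 P_1 M_1$, but the inequality $\deg P_2 = b - d_2 \geq 7b/8 > b/8 \geq \deg A_1$ rules out $P_2 \mid A_1$ on degree grounds, while $\deg P_2 \leq b - d_1$ together with the strict inequality in the constraint on $M_1$ rules out $P_2$ as a prime factor of $M_1$. The only remaining possibility is $P_2 = P_1$, which forces $d_1 = d_2$; the unique decomposition of $F/P_1$ into its ``degree $\leq b - d_1$'' and ``degree $> b - d_1$'' parts then forces $A_1 = A_2$ and $M_1 = M_2$. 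This is where the choice of a strict inequality in the constraint on $M$, together with the sharp separation between $\deg A \leq b/8$ and $\deg P = b-d \geq 7b/8$, does the work.

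Once injectivity is secured, counting the quadruples is routine. The Prime Polynomial Theorem provides $\gg q^{b-d}/(b-d)$ choices for $P$, and the function field analogue of Mertens' theorem (proved in the appendix) provides $\gg q^{n-m-(b-d)}/(b-d)$ choices for $M$ in the relevant regime, which is guaranteed by the hypotheses $b \leq n/2$ and $m \leq b/8$. Since $d \leq b/8$ gives $b - d \asymp b$, each pair $(A,d)$ with $d \in \Ll(A)$ contributes $\gg q^{n-m}/b^2$. Summing over $d \in \Ll(A)$ yields a factor of $L(A)$, and summing over all $A$ of degree at most $b/8$ gives
$$|H(n,b)| \;\gg\; \sum_{m \leq b/8}\, \sum_{A \in \M_m} \frac{L(A)\, q^{n-m}}{b^2} \;=\; \frac{q^n}{b^2} \sum_{\deg A \leq b/8} \frac{L(A)}{|A|},$$
as desired. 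I expect the injectivity verification to be the main technical obstacle; the counting then reduces to an application of PPT and the appendix's Mertens-type estimate.
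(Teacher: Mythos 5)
Your construction (parameterize by $(A, d, P, M)$ with $M$ ranging over polynomials all of whose prime factors have degree $> b-d$) is close in spirit to the paper's, and your injectivity argument is correct and actually more careful than the paper's brief assertion of unique representation. However, you have misidentified where the difficulty lies: the injectivity is not the main obstacle, the counting of $M$ is.

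The smooth-polynomial estimate \eqref{smooth} counts $F \in \M_N$ with $\deg(P^-(F)) \geq D$ and gives $\asymp q^N/D$ only under the constraint $D \leq N$. In your setting $N = n - m - (b-d)$ and $D = b-d+1$, so the constraint is $2(b-d)+1 \leq n - m$. This is \emph{not} guaranteed by $b \leq n/2$ and $m \leq b/8$: taking $n = 2b$, $m \leq b/8$, and $d$ small (e.g. $d = 0$, which always lies in $\Ll(A)$), the left side is $2b+1$ while the right side can be as small as $15b/8$. When the constraint fails with $0 < N < D$, there are \emph{no} valid $M$ at all --- any polynomial of positive degree less than $D$ has a prime factor of degree $< D$ --- so the contribution is $0$, not $\gg q^N/D$. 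Working it out, the constraint amounts to $d \geq (m+1)/2$, so roughly half the values of $d \in \Ll(A)$ are excluded, and your claimed bound $\frac{q^n}{b^2}\sum_A L(A)/|A|$ does not follow as written. You can repair this by observing that $\Ll(A)$ is symmetric under $d \mapsto \deg(A) - d$ (pair each divisor $D$ of $A$ with its cofactor $A/D$), so restricting to $d \geq \deg(A)/2$ still retains at least $L(A)/2$ values of $d$, and for these the constraint $2(b-d)+1 \leq n-m$ does hold.

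The paper avoids the issue differently: it takes $B$ to have all prime factors of degree $\geq b$ or in $[b/4, 3b/4]$, a constraint that does not depend on $d$. This creates two cases: if $\deg B \geq b$ it applies \eqref{smooth} with $D = b \leq \deg B$ (no failure), and if $\deg B < b$ it directly counts $B$ as a product of two primes from $[b/4, 3b/4]$. Both routes work, but either way the ``routine counting'' step is exactly where the argument needs care, and as stated your proposal has a genuine gap there.
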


\begin{proof}

Consider the set of polynomials of the form $F=APB$ where $\deg(A) \leq b/8$, $P$ is a prime with $b-\deg(P)\in\Ll(A)$ and all the primes of $B$ have degree $\geq b$ or in $[b/4,3b/4]$. The condition on $P$ implies that $AP$ has a divisor of degree $b$. Moreover, we have $7b/8\leq \deg(P) \leq b$ and so every polynomial of this form has a unique representation. Fix $A,P$ and note that $\deg(B)=n-\deg(AP)\geq 7b/8$. If $\deg(B)\geq b$ then, by \eqref{smooth}, the number of such $B$ will be greater than
$$|\{B\in \M_{n-\deg(AP)} : \deg(P^-(B))\geq b\}| \asymp \frac{q^{n-\deg(AP)}}{b} = \frac{q^n}{b|AP|},$$
where $P^-(B)$ is the smallest prime divisor of $B$.

Otherwise, if $\deg(B)<b$, then $B$ will have at least two prime divisors from $[b/4,3b/4]$. Hence the number of such $B$ will be greater than
\begin{align*}
\sum_{\substack{d_1,d_2\in [b/4,3b/4] \\ d_1+d_2 = n-\deg(AP)}} \pi_q(d_1)\pi_q(d_2) & = \sum_{d = b/4}^{n-b/4-\deg(AP)} \pi_q(d)\pi_q(n-\deg(AP)-d) \\
& \gg \frac{q^{n}}{|AP|}\sum_{d = b/4}^{5b/4} \frac{1}{d(b-d)} \gg \frac{q^{n}}{b|AP|},
\end{align*}
where $\pi_q(n)$ is the number of prime polynomials of degree $n$.

Therefore,
$$|H(n,b)| \geq \sum_{\deg(A)\leq b/8} \sum_{\substack{P \\ b-\deg(P)\in \Ll(A)}} \sideset{}{^*}\sum_B 1 \gg \frac{q^n}{b} \sum_{\deg(A)\leq b/8} \frac{1}{|A|} \sum_{\substack{P \\ b-\deg(P)\in \Ll(A)}}\frac{1}{|P|},$$
where $\sideset{}{^*}\sum$ indicates we sum over all such $B$ described above.

Finally,
$$\sum_{\substack{P \\ b-\deg(P)\in \Ll(A)}}\frac{1}{|P|} = \sum_{\substack{d \\ b-d\in \Ll(A)}} \frac{\pi_q(d)}{q^d} \sim \sum_{\substack{d \\ b-d\in \Ll(A)}} \frac{1}{d} \gg \frac{L(A)}{b}$$
and this completes the proof.

\end{proof}

For any polynomial $A$, let $\tau(A)$ be the number of divisors of $A$ and $\tau_d(A)$ be the number of divisors of $A$ of degree $d$. Then we clearly have
$$\tau(A) = \sum_{d\in \Ll(A)} \tau_d(A).$$
Then, for any subset $\A$ of polynomials we have by Cauchy-Schwarz that
\begin{align*}
\left(\sum_{A\in \A} \frac{\tau(A)}{|A|} \right)^2 & = \left(\sum_{A\in\A} \sum_{d\in\Ll(A)} \frac{\tau_d(A)}{|A|}\right)^2 \leq \left(\sum_{A\in\A}\sum_{d\in \Ll(A)} \frac{1}{|A|}\right) \left( \sum_{A\in\A}\sum_{d\in\Ll(A)} \frac{\tau_d(A)^2}{|A|}\right)\\
& = \left(\sum_{A\in\A} \frac{L(A)}{|A|}\right)\left(\sum_{A\in\A} \frac{W(A)}{|A|}\right),
\end{align*}
where
$$W(A) = \sum_{d\in \Ll(A)}\tau_d^2(A) = |\{(D,D') : D,D'|A, \deg(D)=\deg(D')\}|.$$
Hence if we have any collection of disjoint sets of polynomials $\A_1,\dots,\A_t$, all of whose degrees are less than $b/8$ then we get from Lemma \ref{lowboundlem1} that
\begin{align} \label{lowboundeq1}
|H(n,b)| \gg \frac{q^n}{b^2} \sum_{j=1}^t\sum_{A\in\A_j} \frac{L(A)}{|A|} \geq \frac{q^n}{b^2}\sum_{j=1}^t \frac{\left(\sum_{A\in\A_j} \frac{\tau(A)}{|A|}\right)^2}{\sum_{A\in \A_j} \frac{W(A)}{|A|}}
\end{align}

We will now construct appropriate sets that will give us the lower bound we desire. Towards this, partition the primes into subsets $D_1,D_2,\dots,$ such that $D_j$ consists of primes whose degree are in the interval $(\lambda_{j-1},\lambda_j]$ so that $\lambda_j$ is largest so that
$$\sum_{\deg(P)\in (\lambda_{j-1},\lambda_j]} \frac{1}{|P|} \leq \log(2).$$
Such partitions exists as a consequence of \eqref{inverseprime}. In fact, \eqref{inverseprime} tells us that for any $\lambda_{j-1}<\lambda_j$, we have
$$\sum_{\deg(P)\in (\lambda_{j-1},\lambda_j]} \frac{1}{|P|} = \log(\lambda_j) - \log(\lambda_{j-1}) +O\left(\frac{1}{\lambda_{j-1}}\right).$$
Therefore, there exists some constant $K$ such that
$$2^{j-K} \leq \lambda_j \leq 2^{j+K}.$$
Finally, for any $\bf{b}=(b_1,\dots,b_J)$ let $\A(\bf{b})$ be the set of square-free polynomials with exactly $b_j$ prime factors coming from the interval $D_j$.

\begin{lem}\label{WAlem}

$$\sum_{A\in \A(\bf{b})} \frac{W(A)}{|A|} \ll \frac{(2\log(2))^{b_1+\dots+b_J}}{b_1!\cdots b_J!} \sum_{j=1}^J  2^{-j+b_1+\cdots+b_j}.$$

\end{lem}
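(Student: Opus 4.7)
The plan is to turn $W(A)$ into a Fourier integral, exploit multiplicativity of the squarefree sum over $\A(\mathbf{b})$, and estimate the resulting integral by a dyadic decomposition in $\theta$.

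Since $A$ is squarefree, $\sum_d \tau_d(A)\, z^d = \prod_{P\mid A}(1+z^{\deg P})$, and extracting the constant coefficient of the Laurent polynomial $\prod_{P\mid A}(1+z^{\deg P})(1+z^{-\deg P}) = \prod_{P\mid A}(2+z^{\deg P}+z^{-\deg P})$ (regarded as a finite Fourier series when $z=e^{i\theta}$) gives
$$W(A)=\sum_d \tau_d(A)^2=\frac{1}{2\pi}\int_{-\pi}^{\pi}\prod_{P\mid A} 2(1+\cos(\theta\deg P))\, d\theta.$$
The integrand is non-negative and factors as a product over the prime divisors of $A$, so after interchanging sum and integral I apply the elementary symmetric inequality $e_{b_j}(x_1,\dots,x_n)\le\frac{1}{b_j!}(x_1+\cdots+x_n)^{b_j}$ (valid for $x_i\ge 0$) inside each $D_j$ to obtain
$$\sum_{A\in\A(\mathbf{b})}\frac{W(A)}{|A|}\le\frac{2^{b_1+\cdots+b_J}}{b_1!\cdots b_J!}\cdot\frac{1}{2\pi}\int_{-\pi}^{\pi}\prod_{j=1}^{J}T_j(\theta)^{b_j}\, d\theta,$$
where $T_j(\theta):=\sum_{P\in D_j}(1+\cos(\theta\deg P))/|P|$. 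It therefore suffices to show the integral is $\ll (\log 2)^{b_1+\cdots+b_J}\sum_{j=1}^{J}2^{-j+b_1+\cdots+b_j}$.

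For this I split $[-\pi,\pi]$ into dyadic shells $I_k=\{2^{-k-1}<|\theta|\le 2^{-k}\}$ for $k=0,\dots,J-1$, together with the central region $|\theta|\le 2^{-J}$, and combine two pointwise bounds on the $T_i$. The trivial estimate $T_i(\theta)\le 2\log 2$, immediate from $\sum_{P\in D_j}|P|^{-1}\le\log 2$, handles indices $i\le k$. For $i>k$, I write $T_i(\theta)=a_i+r_i(\theta)$ with $a_i=\sum_{P\in D_i}|P|^{-1}$ and $r_i(\theta)=\sum_{P\in D_i}\cos(\theta\deg P)/|P|$; Abel summation applied to $r_i\sim\sum_{d\in(\lambda_{i-1},\lambda_i]}\cos(\theta d)/d$ (via the prime polynomial theorem), together with $|\sum_{d=m+1}^{n}e^{i\theta d}|\le 1/|\sin(\theta/2)|$ and $\lambda_i\asymp 2^i$, gives $T_i(\theta)=\log 2+O(2^{k-i})$. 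Combining the two regimes yields $\prod_j T_j(\theta)^{b_j}\ll (\log 2)^{b_1+\cdots+b_J}\cdot 2^{b_1+\cdots+b_k}$ on $I_k$; integrating against the measure $\asymp 2^{-k}$ and summing over $k$ produces the bound, with the central region contributing the $j=J$ term.

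The main technical obstacle is keeping the accumulated error factor $\prod_{i>k}(1+O(2^{k-i}))^{b_i}$ bounded. Whenever some $b_i$ with $i>k$ is large enough that this product threatens to blow up, the plan is to revert to the trivial bound $T_i\le 2\log 2$ on that single index; the resulting loss of a factor $2^{b_i}$ is absorbed into the corresponding term $2^{-i+b_1+\cdots+b_i}$ of $\sum_j 2^{-j+b_1+\cdots+b_j}$, which in that regime dominates the sum.
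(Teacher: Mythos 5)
Your starting point is correct and genuinely different from the paper. The identity
\[
W(A)=\sum_d\tau_d(A)^2=\frac{1}{2\pi}\int_{-\pi}^{\pi}\prod_{P\mid A}2\bigl(1+\cos(\theta\deg P)\bigr)\,d\theta
\]
is valid for squarefree $A$ (it is Parseval applied to $\prod_{P\mid A}(1+z^{\deg P})$), the interchange of sum and integral is legitimate because the integrand is nonnegative, and the elementary-symmetric bound $e_{b_j}\le(\sum x_P)^{b_j}/b_j!$ correctly produces
\[
\sum_{A\in\A(\mathbf{b})}\frac{W(A)}{|A|}\le\frac{2^{B}}{b_1!\cdots b_J!}\cdot\frac{1}{2\pi}\int_{-\pi}^{\pi}\prod_{j=1}^J T_j(\theta)^{b_j}\,d\theta,
\qquad B=b_1+\cdots+b_J.
\]
The paper instead writes $W(A)$ as the count of pairs $Y,Z\subset\{1,\dots,B\}$ with $\sum_{i\in Y}\deg P_i=\sum_{i\in Z}\deg P_i$, separates $Y=Z$ from $Y\ne Z$, and in the off-diagonal case fixes all $P_i$ except $P_I$, where $I$ is the top index of the symmetric difference; the constraint then pins down $\deg P_I$ and $\sum_{P_I}1/|P_I|\ll 2^{-E(I)}$. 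This is fully elementary and there is never any error product to control.

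The gap in your plan is exactly the one you flag, and it is not cosmetic. On the shell $I_k$ you have $T_i(\theta)=\log2+O(2^{k-i})$ for $i>k$, so the error factor is $\prod_{i>k}(1+O(2^{k-i}))^{b_i}\le\exp\bigl(O(\sum_{i>k}b_i2^{k-i})\bigr)$. The lemma places no upper restriction on $b_i$, and for $b_i\asymp i$ one gets $\sum_{i>k}b_i2^{k-i}\asymp k$, so the error factor is $e^{\Omega(k)}$, which swamps the target. Your proposed remedy is to single out a set $S_k$ of bad indices and use the trivial bound there, but then the $I_k$ contribution becomes $\ll 2^{-k+b_1+\cdots+b_m}=2^{m-k}f_m$ with $m=\max S_k$ and $f_m=2^{-m+b_1+\cdots+b_m}$. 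To keep the remaining product bounded you need $S_k$ to contain every $i>k$ with $b_i$ larger than, say, $2^{(i-k)/2}$, so $m-k$ can be as large as roughly $2\log_2(\max_j b_j)$, and you lose a factor of order $2^{m-k}$ per shell, times the $O(m-k)$ shells that share the same $m$. Summed over $k$ this gives $\ll\sum_m b_m^{O(1)}f_m$ rather than $\ll\sum_m f_m$; with $J\asymp\log b$ this is a polylog-in-$b$ loss, which propagates directly into the lower bound for $|H(n,b)|$ and destroys the $(\log b)^{-3/2}$ power. So as written the absorption step does not close, and you would need a genuinely finer argument (or a different decomposition of the integral) to recover the lemma by this route. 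The paper's combinatorial proof sidesteps the problem entirely, which is why Ford sets it up that way.
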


\begin{proof}

Let $B = b_1+\cdots+b_J$ and $A=P_1\cdots P_B$ such that
\begin{align}\label{intervalcond}
P_1,\dots,P_{b_1}\in D_1, P_{b_1+1},\dots,P_{b_1+b_2}\in D_2 \mbox{ and so on.}
\end{align}
Then $W(A)$ is the number of subsets $Y,Z \subset \{1,\dots,B\}$ such that
\begin{align}\label{YZcond}
\sum_{i\in Y} \deg(P_i) = \sum_{i\in Z} \deg(P_i).
\end{align}
Hence,
\begin{align}\label{WAbound1}
\sum_{A\in \A(\bf{b})} \frac{W(A)}{|A|} \leq \frac{1}{b_1!\cdots b_J!} \sum_{Y,Z \subset \{1,\dots,B\}} \sideset{}{'}\sum_{P_1,\dots,P_B} \frac{1}{|P_1|\cdots|P_B|}.
\end{align}
where $\sideset{}{'}\sum$ indicates that we are summing over all tuples $P_1,\dots,P_B$ that satisfy \eqref{intervalcond} and \eqref{YZcond}.

Consider the diagonal terms when $Y=Z$ of \eqref{WAbound1}:
$$\sum_{Y=Z \subset\{1,\dots,B\}}\sideset{}{'}\sum_{P_1,\dots,P_B} \frac{1}{|P_1|\cdots|P_B|} \leq \sum_{Y\subset\{1,\dots,B\}} \prod_{j=1}^J \left(\sum_{P_j\in D_j} \frac{1}{|P_j|}\right)^{b_j} \leq (2\log(2))^B $$

For the off-diagonal terms when $Y\not=Z$, let $I$ be the maximum element of $(Y\cup Z) \setminus (Y\cap Z)$. If we fix all the other $P_i$, then this fixes the degree of $P_I$ by \eqref{YZcond}. Moreover, if we let $E(I)$ be such that $P_I\in D_{E(I)}$ then $\deg(P_I)\geq \lambda_{E(I)-1} \gg 2^{E(I)}$. Therefore,
$$\sum_{P_I} \frac{1}{P_I} = \frac{\pi_q(\deg(P_I))}{q^{\deg(P_I)}}\ll \frac{1}{|\deg(P_I)|} \ll 2^{-E(I)}$$
Hence for a fixed $Y\not=Z$ we get
$$\sideset{}{'}\sum_{P_1,\dots,P_B} \frac{1}{|P_1|\cdots |P_B|} \ll 2^{-E(I)}\log(2)^B. $$

Finally, there are $2^{B+I-1}$ pairs of $Y,Z$ for each fixed $I$ and we get,
\begin{align*}
\sum_{A\in \A(b)} \frac{W(A)}{|A|} & \leq \frac{(2\log(2))^B}{b_1!\cdots b_J!} \left[ 1+\sum_{I=1}^B 2^{-E(I)}2^{I-1}  \right]\\
& \ll \frac{(2\log(2))^B}{b_1!\cdots b_J!} \sum_{j=1}^J 2^{-j}\sum_{I: E(I)=j}2^{I} \\
& \ll \frac{(2\log(2))^{b_1+\dots+b_J}}{b_1!\cdots b_J!} \sum_{j=1}^J 2^{-j} 2^{b_1+\cdots+b_j},
\end{align*}
where the last inequality follows from that fact that $E(I)=j$ if and only if $b_1+\cdots+b_{j-1}\leq I \leq b_1+\cdots b_j$.
\end{proof}

\begin{lem}\label{tauAlem}

If we suppose that $b_i=0$ for $i<M$ and $b_j\leq Mj$ for a sufficiently large $M$, then
$$\sum_{A\in \A(\textbf{b})} \frac{\tau(A)}{|A|} \gg \frac{(2\log(2))^{b_M+\cdots+b_J}}{b_M!\cdots b_J!}$$

\end{lem}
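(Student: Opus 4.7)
The plan is to reduce the sum to a product structure indexed by the intervals $D_j$ and then exploit the defining maximality of $\lambda_j$ to show that each factor is essentially $\log(2)$, while the conditions $b_i = 0$ for $i < M$ and $b_j \leq Mj$ keep all error terms summable.

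First I would note that every $A \in \A(\mathbf{b})$ is square-free with exactly $B := b_M + \cdots + b_J$ distinct prime factors, so $\tau(A) = 2^B$ and
\begin{equation*}
\sum_{A \in \A(\mathbf{b})} \frac{\tau(A)}{|A|} = 2^B \sum_{A \in \A(\mathbf{b})} \frac{1}{|A|}.
\end{equation*}
Each such $A$ corresponds to an unordered choice of $b_j$ distinct primes from $D_j$ for $M \leq j \leq J$, so passing to ordered tuples and writing $S_j := \sum_{P \in D_j} |P|^{-1}$ and $T_j := \sum_{P \in D_j} |P|^{-2}$, inclusion--exclusion gives
\begin{equation*}
\prod_{j=M}^{J} b_j! \cdot \sum_{A \in \A(\mathbf{b})} \frac{1}{|A|} \geq \prod_{j=M}^{J} S_j^{b_j} - \sum_{j=M}^{J} \binom{b_j}{2} T_j S_j^{b_j-2} \prod_{i \neq j} S_i^{b_i},
\end{equation*}
where the subtracted term bounds, via a union bound over colliding pairs, the contribution of ordered tuples with a repeated prime inside some $D_j$.

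Next I would show that each $S_j$ is essentially $\log(2)$. By the maximality in the definition of $\lambda_j$, extending $D_j$ by one further degree would push its partial sum above $\log(2)$; combining this with \eqref{inverseprime} and the bound $\lambda_j \asymp 2^j$ yields
\begin{equation*}
\log(2) \geq S_j \geq \log(2) - C \cdot 2^{-j}
\end{equation*}
for some absolute constant $C$. A similar estimate gives $T_j \leq q^{-\lambda_{j-1}} S_j$, which decays super-geometrically in $j$ since $\lambda_{j-1} \asymp 2^j$. Under the hypothesis $b_j \leq Mj$, both tails $\sum_{j \geq M} b_j 2^{-j}$ and $\sum_{j \geq M} b_j^2 T_j/S_j^2$ are convergent, and can be forced below any prescribed threshold by taking $M$ large. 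Consequently the correction factor in the displayed inequality stays above $1/2$, and
\begin{equation*}
\prod_{j=M}^{J} S_j^{b_j} \geq (\log 2)^B \prod_{j=M}^{J} \bigl(1 - O(2^{-j})\bigr)^{b_j} \gg (\log 2)^B
\end{equation*}
with implied constant independent of $\mathbf{b}$.

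Combining the two, $\sum_{A \in \A(\mathbf{b})} 1/|A| \gg (\log 2)^B/(b_M! \cdots b_J!)$, and multiplying by $2^B$ produces the claimed lower bound. I expect the main obstacle to be extracting the matching lower bound $S_j \geq \log(2) - O(2^{-j})$: the construction of $\lambda_j$ directly guarantees only the upper bound $S_j \leq \log(2)$, so one must combine the maximality of $\lambda_j$ with the prime polynomial theorem to quantify the deficit, and then verify that this deficit, compounded over $b_j$ copies of each interval and summed over $j$, does not destroy the main term. The two hypotheses on $\mathbf{b}$ are precisely what make these tail estimates close up.
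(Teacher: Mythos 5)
Your proof is correct and follows essentially the same route as the paper: factor out $\tau(A)=2^B$, exploit the product structure over the intervals $D_j$ with $b_j!$ for passing to ordered tuples, show each $S_j \geq \log 2 - O(2^{-j})$ via the maximality of $\lambda_j$ together with \eqref{inverseprime}, and use the hypotheses on $\mathbf{b}$ to make the accumulated deficit $O(1)$. The only departure is in handling distinctness --- the paper peels off the already-chosen primes one at a time and absorbs the correction $k/q^{\lambda_{j-1}}$ into the same $(1-C_j)^{b_j}$ factor, whereas you separate it as a global Bonferroni-style subtraction $\sum_j \binom{b_j}{2}T_j S_j^{b_j-2}\prod_{i\neq j}S_i^{b_i}$ --- but both devices are valid and equivalent here.
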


\begin{proof}

We have
$$\sum_{A\in \A(\textbf{b})} \frac{\tau(A)}{|A|} = 2^{b_M+\cdots+b_J} \prod_{j=M}^J \frac{1}{b_j!} \left(\sum_{\substack{P_1,\cdots,P_{b_j} \in D_j \\ P_i \mbox{ distinct}}} \frac{1}{|P_1\cdots P_{b_j}|} \right)$$
By the choice of $D_j$ and the prime polynomial theorem, we get that there is an absolute constant $C$ such that
$$\sum_{P\in D_j} \frac{1}{|P|} \geq \log(2) - \sum_{\deg(P)=\lambda_j+1} \frac{1}{|P|} \geq \log(2) - \frac{1}{\lambda_j+1} - \frac{C}{q^{\lambda_j/2}}.$$
Now, fix $P_1,\dots,P_k \in D_j$ and consider the sum
\begin{align*}
\sum_{\substack{P\in D_j \\ P\not=P_1,\dots,P_k}} \frac{1}{|P|} & = \sum_{P\in D_j} \frac{1}{|P|} - \sum_{i=1}^k \frac{1}{|P_i|} \geq \log(2) - \frac{1}{\lambda_j+1}- \frac{C}{q^{\lambda_j/2}} - \frac{k}{q^{\lambda_{j-1}}}
\end{align*}

Therefore,
\begin{align*}
\prod_{j=M}^J \left(\sum_{\substack{P_1,\cdots,P_{b_j} \in D_j \\ P_i \mbox{ distinct}}} \frac{1}{|P_1\cdots P_{b_j}|} \right) & \geq \prod_{j=M}^J \left(\log(2)-\frac{1}{\lambda_j+1}- \frac{C}{q^{\lambda_j/2}} - \frac{b_j}{q^{\lambda_{j-1}}}  \right)^{b_j} \\
& \geq \log(2)^{b_M+\cdots+b_J} \prod_{j=M}^J \left(1 - \frac{1}{\log(2)}\left(\frac{1}{\lambda_j+1}+ \frac{C}{q^{\lambda_j/2}} + \frac{b_j}{q^{\lambda_{j-1}}} \right) \right)^{b_j}.
\end{align*}
So it remains to show that this remaining product is bounded above. Indeed if we denote
$$C_j := \frac{1}{\log(2)}\left(\frac{1}{\lambda_j+1}+ \frac{C}{q^{\lambda_j/2}} + \frac{b_j}{q^{\lambda_{j-1}}} \right) \ll \frac{1}{2^j}$$
then
\begin{align*}
-\log \prod_{j=M}^J \left(1 - C_j\right)^{b_j} & = -\sum_{j=M}^J b_j \log \left(1-C_j\right) \\
& = \sum_{j=M}^J b_j\sum_{n=1}^{\infty} \frac{C_j^n}{n} \\
& \ll \sum_{j=M}^J  \sum_{n=1}^{\infty} \frac{j}{2^{nj}n} = O(1).
\end{align*}
This completes the proof.

\end{proof}

Finally, set $k = \lfloor \log_2(b) - 2M\rfloor$ and let $\mathcal{B}$ be the set of $\mathbf{b} = (b_1,\dots,b_J)$ with $J=M+k-1$, $b_j=0$ for $j\leq M$, $b_j \leq \min(Mj, M(J-j+1))$. Then for every $A\in \A(\mathbf{b})$, we have
\begin{align*}
\deg(A) & \leq \sum_{j=M}^J b_j \lambda_j \leq  M \sum_{\ell=0}^{J-M} (\ell+1) 2^{J+K-\ell} \\
 & \leq 2^{K+1}M2^{J+1} = 2^{K+1}M2^{M+k} \\
 & \leq 2^{K+1} \frac{M}{2^M} b  \leq \frac{b}{8}
\end{align*}
for $M$ sufficiently large.

Therefore, \eqref{lowboundeq1} gives us
$$|H(n,b)| \gg \frac{q^n}{b^2} \sum_{\mathbf{b}\in \mathcal{B}}\frac{\left(\sum_{A\in\A(\mathbf{b})} \frac{\tau(A)}{|A|}\right)^2}{\sum_{A\in \A(\mathbf{b})} \frac{W(A)}{|A|}}.$$

Now, if we let
$$f(\mathbf{b}) = \sum_{h=M}^J 2^{M-1-h+b_M+\cdots+b_h}$$
then we have by Lemma \ref{WAlem} that
\begin{align}\label{WAbound2}
\sum_{A\in \A(\mathbf{b})} \frac{W(A)}{|A|} \ll \frac{(2\log(2))^k}{b_M!\cdots b_J!} \left(1+2^{1-M}f(\mathbf{b})\right) \leq \frac{(2\log(2))^k}{b_M!\cdots b_J!}f(\mathbf{b})
\end{align}
since $f(\mathbf{b})\geq 1/2$. Hence, by Lemma \ref{tauAlem}, \eqref{lowboundeq1} and \eqref{WAbound2}, we get
$$|H(n,b)| \gg \frac{q^{n}(2\log(2))^k}{b^2} \sum_{\mathbf{b}\in\mathcal{B}} \frac{1}{b_M!\cdots b_J! f(\mathbf{b})}.$$

Finally, Ford in \cite{F2} shows that
$$\sum_{\mathbf{b}\in\mathcal{B}} \frac{1}{b_M!\cdots b_J! f(\mathbf{b})} \gg \frac{k^{k-1}}{k!} \gg \frac{1}{k^{3/2}},$$
where the last inequality is due to Stirling's formula.

Therefore, since $k \sim \log(b)/\log(2)$, we get
$$|H(n,b)| \gg \frac{q^{n}}{b^\delta \log(b)^{3/2}}$$
which finished the proof of the lower bound.

\section{Upper Bound}\label{Section4}

Before we begin, we need some basic bounds for $L(A)$.

\begin{lem}\label{Upboundlem1}

\begin{enumerate}

\item $L(A) \leq \min(\tau(A), \deg(A))$
\item If $(A,B)=1$, then $L(AB)\leq \tau(B)L(A)$
\item If $P_1,\dots,P_k$ are distinct primes, then $L(P_1\cdots P_k)\leq \min_{0\leq j \leq k} (2^{k-j} \deg(P_1\cdots P_j))$

\end{enumerate}

\end{lem}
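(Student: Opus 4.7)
The three bounds are all elementary consequences of unwinding definitions, so the plan is essentially to execute each part in order, with (3) following quickly from (1) and (2). There is no real obstacle here — this is a setup lemma whose difficulty is purely bookkeeping.

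For part (1), observe that the map $D \mapsto \deg(D)$ from divisors of $A$ to $\mathscr{L}(A)$ is by definition surjective, so $L(A)\leq \tau(A)$. On the other hand, any divisor degree lies in $\{0,1,\dots,\deg(A)\}$, giving $L(A)\leq \deg(A)+1$, which gives the stated inequality (the bound is used only in asymptotic applications, so the off-by-one is harmless, and in fact when $A$ is non-constant and has $\gcd$-type constraints one can sharpen it; for present purposes I will just record the $\deg(A)+1$ version).

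For part (2), I would use coprimality to get unique factorization of divisors: every divisor $D$ of $AB$ factors uniquely as $D=D_A D_B$ with $D_A\mid A$ and $D_B\mid B$, and $\deg(D)=\deg(D_A)+\deg(D_B)$. Hence
\[
\mathscr{L}(AB) \;\subseteq\; \bigcup_{D_B\mid B}\bigl(\deg(D_B)+\mathscr{L}(A)\bigr),
\]
which is a union of $\tau(B)$ translates of $\mathscr{L}(A)$. Taking cardinalities gives $L(AB)\leq \tau(B)L(A)$.

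For part (3), fix $j\in\{0,1,\dots,k\}$ and split $P_1\cdots P_k=AB$ with $A=P_1\cdots P_j$ and $B=P_{j+1}\cdots P_k$. Since the $P_i$ are distinct primes, $(A,B)=1$ and $\tau(B)=2^{k-j}$. Part (2) then gives $L(P_1\cdots P_k)\leq 2^{k-j}L(P_1\cdots P_j)$, and part (1) bounds $L(P_1\cdots P_j)\leq \deg(P_1\cdots P_j)$. Combining and taking the minimum over $j$ yields the claim. The only moving part is the choice of split, and since the inequality must hold for every $j$, one simply states it and takes the min.
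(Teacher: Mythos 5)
Your proof is correct and follows essentially the same route as the paper's in all three parts: surjectivity of $D\mapsto\deg D$ for (1), the coprime factorization of divisors of $AB$ for (2), and applying (1) and (2) to the split $A=P_1\cdots P_j$, $B=P_{j+1}\cdots P_k$ for (3). Your note about the off-by-one in part (1) is in fact a legitimate correction to the paper: since the trivial divisor gives $0\in\mathscr{L}(A)$, the tight bound is $L(A)\leq\deg(A)+1$, whereas the paper asserts $\mathscr{L}(A)\subseteq\{1,\dots,\deg(A)\}$, which omits $0$; this propagates to a (harmless in context, but real) failure of part (3) at $j=0$, where the stated right-hand side is $0$ while $L\geq 1$ always.
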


\begin{proof}

For part $(1)$, we have
$$L(A) = \sum_{d\in\Ll(A)} 1  \leq \sum_{D|A} 1 = \tau(A).$$
While on the other hand, $\Ll(A) \subset \{1,\dots,\deg(A)\}$ and so $L(A) \leq \deg(A)$.

For part $(2)$, we have
$$\Ll(AB) = \bigcup_{D|B} \{d+\deg(D) : d\in \Ll(A)\}$$
and so $L(AB) \leq \sum_{D|B} L(A) = \tau(B)L(A)$.

Part $(3)$ follows from applying parts $(1)$ and $(2)$ with $A = P_1\cdots P_j$ and $B=P_{j+1}\cdots P_k$.

\end{proof}

We shall first prove the upper bound in the case of squarefree polynomials. That is, let $H^*(n,b)$ be the set of squarefree polynomials in $M_n$ which has a divisor of degree $b$.

\begin{lem} \label{sqfreelem}
For $b\leq n/2$,
$$|H^*(n,b)| \ll q^n (S(b)+S(n-b)),$$
as $q^n\to\infty$, where
$$S(d) = \sum_{\substack{\deg(P^+(A))\leq d \\ \mu^2(A)=1}} \frac{L(A)}{|A|\left(\deg(P^+(A))+d-\deg(A)\right)^2}$$
and $P^+(A)$ denotes the largest prime divisor of $A$ and $\mu$ is the M\"obius function.
\end{lem}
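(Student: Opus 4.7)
The approach is to adapt Ford's integer-case argument \cite{F1,F2} to $\Ff_q[t]$. For each $F \in H^*(n,b)$ fix a divisor $D \mid F$ of degree $b$, set $D' = F/D$, and let $P_1 = P^+(D)$, $P_2 = P^+(D')$. Split into two cases depending on whether $\deg P_1 \leq \deg P_2$ or $\deg P_1 > \deg P_2$. The symmetry $H^*(n,b) = H^*(n,n-b)$ (swap $D$ and $D'$) interchanges the two cases while replacing $b$ by $n-b$, which is what produces the two terms $S(b)$ and $S(n-b)$ in the statement. It therefore suffices to treat the case $\deg P_1 \leq \deg P_2$.

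In that case, write $F = APR$ with $P := P_1$, $A := D/P$ and $R := D'$. Then $A$ is squarefree with $\deg P^+(A) < \deg P$ and $\deg A + \deg P = b$, and $R$ is squarefree of degree $n-b$, coprime to $AP$. The next step is to \emph{enlarge} the parametrization: let $A$ range over arbitrary squarefree polynomials and $\ell \in \Ll(A)$ denote the degree of the piece of the divisor contributed by $A$, so that $\deg P = b - \ell$. This replaces the rigid equality $\deg A = b - \deg P$ by the much looser condition $\ell \in \Ll(A)$, and is the source of the factor $L(A)$: each $F$ will be reached through several admissible $\ell$, but the total overcount is bounded, while on the other hand each admissible $A$ gets to contribute through every one of its at most $L(A)$ divisor-degrees.

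The inner counting proceeds as follows. Fix $A$ squarefree and $\ell \in \Ll(A)$ with $\ell < b - \deg P^+(A)$ (so that a prime $P$ with $\deg P = b - \ell > \deg P^+(A)$ can exist). The prime polynomial theorem bounds the number of admissible $P$ by $\ll q^{b-\ell}/(b-\ell)$, and the function-field Mertens-type smooth polynomial estimate (assembled in the appendix) bounds the number of admissible squarefree $R$ of degree $n - \deg A - (b-\ell)$ with all primes of degree $> b-\ell$ and coprime to $AP$ by $\ll q^{n-\deg A-(b-\ell)}/(b-\ell)$. Multiplying gives $\ll q^{n-\deg A}/(b-\ell)^2$ per $(A,\ell)$. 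The constraints $\ell \leq \deg A$ and $\ell < b - \deg P^+(A)$ force
$$b - \ell \;\geq\; \max\bigl(\deg P^+(A)+1,\; b - \deg A\bigr) \;\geq\; \tfrac{1}{2}\bigl(\deg P^+(A) + b - \deg A\bigr),$$
so $(b-\ell)^{-2} \ll (\deg P^+(A) + b - \deg A)^{-2}$. Summing over the at most $L(A)$ admissible $\ell$ and then over $A$ gives
$$|H^*(n,b)| \;\ll\; q^n \sum_{\substack{A\ \text{sqfree} \\ \deg P^+(A)\leq b}} \frac{L(A)}{|A|\,(\deg P^+(A) + b - \deg A)^2} \;=\; q^n S(b),$$
with the term $q^n S(n-b)$ appearing by the symmetric argument applied to $D'$.

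The main obstacle is twofold. First, the function-field smooth polynomial estimate must be sharp enough to produce the explicit denominator $b - \ell$ (rather than a weaker polynomial decay); this is a direct function-field analogue of Ford's integer bound and rests on function-field Mertens estimates. Second, the enlargement of the parametrization must only overcount each $F$ by an absolute constant, which requires carefully verifying that the decomposition $(A,\ell,P,R)$ sweeps out $H^*(n,b)$ with bounded multiplicity; Lemma~\ref{Upboundlem1} will be helpful here in controlling the combinatorial redundancy via bounds on $L(A)$.
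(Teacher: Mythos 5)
Your decomposition $F = APR$ with $A := D/P$ and $R := D'$ does not give you the roughness property you then invoke. You assert that the number of admissible $R$ of degree $n - \deg A - (b-\ell)$ ``with all primes of degree $> b-\ell$'' is $\ll q^{n-\deg A-(b-\ell)}/(b-\ell)$, but under your explicit definition $R = D'$ the polynomial $R$ can have arbitrarily small prime divisors: the case hypothesis $\deg P^+(D)\leq \deg P^+(D')$ constrains only the \emph{largest} prime of $D'$, not the smallest, so $D'$ can easily be divisible by a degree-$1$ prime. Without the condition $\deg P^-(R)\geq \deg P$ the smooth-polynomial estimate \eqref{smooth} is simply not applicable, and the count of $R$ is $\asymp q^{\deg R}$ with no saving of $1/(b-\ell)$. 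That loses a full factor of $(b-\ell)$ and would at best give
$$|H^*(n,b)|\ \ll\ q^n\sum_{A}\frac{L(A)}{|A|\bigl(\deg P^+(A)+b-\deg A\bigr)},$$
with exponent $1$ in the denominator instead of the required $2$.

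The repair — which is what the paper does — is to partition the primes of $F$ by \emph{size}, not by which side of $D\cdot D'$ they come from: put into $A$ all primes of $F$ of degree $\leq \deg P$ (in particular including the small primes of $D'$) and into $B$ all primes of degree $\geq \deg P$ other than $P$ itself, so that $\deg P^-(B)\geq\deg P$ holds by construction and \eqref{smooth} applies to $B$. With this $A$, the equality $\deg A = b-\deg P$ becomes the looser requirement $b-\deg P\in\Ll(A)$, and summing over the $\leq L(A)$ admissible values of $\deg P$ is the actual source of the factor $L(A)$. Your ``enlargement of the parametrization'' paragraph gestures toward this but is internally inconsistent: you never let $A$ absorb primes of $D'$, so with your $A = D/P$ the relation $\deg A = b-\deg P$ is exact and $L(A)$ has no reason to appear. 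Also, your concern about bounding the overcount is not needed for an upper bound; one only needs that every $F\in H^*(n,b)$ is hit by at least one admissible tuple, and then any overcounting only helps the inequality.
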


\begin{proof}

Let $F\in H^*(n,b)$. Then $F=G_1G_2$ where $G_1\in M_b$ and $G_2\in M_{n-b}$. Moreover, necessarily, $G_1$ and $G_2$ are squarefree and coprime.

First, suppose that $\deg(P^+(G_1))\leq \deg(P^+(G_2))$ and choose $P|G_1$ such that $\deg(P)=\deg(P^+(G_1))$. Write $F=ABP$ such that $\deg(P^+(A))\leq \deg(P)$ and all primes dividing $G_1$, except for $P$, divide $A$ and $\deg(P^-(B))\geq \deg(P)$ and all primes dividing $G_2$ with degree greater than or equal to $P$ divides $B$.

Then, by design we have $AP$ has a divisor of degree $b$. Therefore, $\deg(P)\geq b-\deg(A)$. Moreover, if we fix $A$ and $P$, we get that $B\in \M_{n-\deg(AP)}$ with $\deg(P^-(B))\geq \deg(P)$. Therefore, by \eqref{smooth} the number of such $B$ will be
$$\ll \frac{q^n}{|AP|\deg(P)}$$

We know that $A$ has a divisor of degree $b-\deg(P)$. So we get that
\begin{align} \label{sqfreelemeq1}
\sum_{\substack{\deg(P)\geq C \\ b-\deg(P)\in L(A)}} \frac{1}{|P|\deg(P)} &\ll \frac{1}{C} \sum_{\substack{d\in\Ll(A) \\ d-b\geq C }} \sum_{P\in M_{d-b}} \frac{1}{|P|}\\
& \ll \frac{1}{C} \sum_{\substack{d\in\Ll(A) \\ d-b\geq C }} \frac{1}{d-b} \ll \frac{L(A)}{C^2} \nonumber
\end{align}

We have that $\deg(P) \geq \max(\deg(P^+(A)),b-\deg(A))$. The case where $\deg(P^+(A))\leq b-\deg(A)$ will contribute to $H^*(n,b)$ at most
\begin{align*}
& q^n \sum_{\substack{ \deg(P^+(A)) \leq b-\deg(A) \\ \mu^2(A)=1}} \frac{1}{|A|} \sum_{\substack{\deg(P)\geq b-\deg(A) \\ b-\deg(P)\in \Ll(A)}} \frac{1}{|P|\deg(P)} \\
 \ll & q^n \sum_{\substack{\deg(P^+(A))\leq b \\ \mu^2(A)=1}} \frac{L(A)}{|A|(b-\deg(A))^2} \\
 \ll & q^n S(b),
\end{align*}
where the last inequality comes from the fact that $b-\deg(A) \geq (\deg(P^+(A))+b-\deg(A))/2$ in this case.

In the case where $\deg(P^+(A))\geq d-\deg(A)$, then $\deg(P) \geq \deg(P^+(A))$. Moreover, since $AP$ has a divisor of degree $b$, we must have $\deg(P^+(A))\leq b$. Hence we get this case contributes to $H^*(n,b)$ at most
\begin{align*}
&q^n \sum_{\substack{b-\deg(A)\leq \deg(P^+(A))\leq b \\ \mu^2(A)=1}} \frac{1}{|A|} \sum_{\substack{\deg(P) \geq \deg(P^+(A)) \\ b-\deg(P) \in \Ll(A)}} \frac{1}{|P|\deg(P)}\\
\ll &  q^n \sum_{\substack{ \deg(P^+(A))\leq b \\ \mu^2(A)=1}} \frac{L(A)}{|A|\deg(P^+(A))^2}\\
\ll & q^n S(b),
\end{align*}
where again the last inequality comes from the fact that $\deg(P^+(A)) \geq (\deg(P^+(A))+b-\deg(A))/2$ in this case.

Therefore, we get a contribution of at most $q^nS(b)$ under the assumption that $\deg(P^+(G_1))\leq \deg(P^+(G_2))$. Now, suppose $F=G_1G_2$ with $G_1\in M_b$, $G_2\in M_{n-b}$ such that $\deg(P^+(G_2))\leq \deg(P^+(G_1))$ and choose $P|G_2$ such that $\deg(P)=\deg(P^+(G_2))$. Then write $F=ABP$ such that $\deg(P^+(A))\leq \deg(P)$, all primes that divide $G_2$ divide $A$ and $\deg(P^-(B))\geq \deg(P)$ and all the primes dividing $G_1$ whose degree is greater than or equal to $\deg(P)$ divide $B$.

Following the same logic as above with $b$ replaced with $n-b$, we get that this contributes at most
$$\ll q^n \sum_{\substack{\deg(P^+(A))\leq n-b \\ \mu^2(A)=1}} \frac{L(A)}{|A|\left(n-b-\deg(A)+\deg(P^+(A))\right)^2} = q^nS(n-b)$$
which concludes the proof.

\end{proof}

Define
$$T(d,m) = \sum_{\substack{\deg(P^+(A))\leq d \\ \deg(A)\geq m, \mu^2(A)=1}} \frac{L(A)}{|A|}$$
If either $\deg(A)\leq d/2$ or $\deg(P^+(A))\geq \epsilon d$, then $(d-\deg(A)+\deg(P^+(A)))^2 \gg d^2$. Conversely if $\deg(P^+(A))\leq \epsilon d$ then we can find a $0\leq g \leq \log(d)+\log(\epsilon)$ such that $e^g \leq \deg(P^+(A))\leq e^{g+1}$ and we get
\begin{align*}
S(d) & = \sum_{\substack{\deg(P^+(A))\leq d \\ \mu^2(A)=1}} \frac{L(A)}{|A|\left(\deg(P^+(A))+d-\deg(A)+\right)^2} \\
&\ll \frac{T(d,1)}{d^2} + \sum_{\substack{\deg(P^+(A))\leq \epsilon d \\ \deg(A)\geq d/2, \mu^2(A)=1}} \frac{L(A)}{|A|\left(\deg(P^+(A))+d-\deg(A)\right)^2}\\
&\ll \frac{T(d,1)}{d^2} + \sum_{g=0}^{\log(d)+\log(\epsilon)} \sum_{\substack{e^{g-1}\leq \deg(P^+(A))\leq e^g \\ \deg(A)\geq d/2, \mu^2(A)=1}} \frac{L(A)}{|A|\left(\deg(P^+(A))+d-\deg(A)\right)^2}\\
&\ll \frac{T(d,1)}{d^2} + \sum_{g=0}^{\log(d)+\log(\epsilon)} e^{-2g}T(e^g,d/2).
\end{align*}

Finally define
$$T_k(d,m) = \sum_{\substack{\deg(P^+(A))\leq d \\ \deg(A)\geq m, \mu^2(A)=1 \\ \omega(A)=k}} \frac{L(A)}{|A|},$$
where $\omega(A)$ is the number of prime divisors of $A$.

\begin{lem}\label{Tklem}

For $d$ large and $m\geq 1$, let $v=\lfloor \log_2(d)\rfloor$. The for $1\leq k \leq 10v$, we have
$$T_k(d,m) \ll e^{-m/d} (2\log(d))^k \frac{1+|v-k|^2}{(k+1)!(2^{k-v}+1)}$$

\end{lem}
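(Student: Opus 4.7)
The plan is to follow Ford's proof of the corresponding integer result (Lemma 4.1 in \cite{F2}) and reduce $T_k(d,m)$ to a purely combinatorial sum over integer tuples that Ford explicitly bounds. Since $A$ is squarefree with $\omega(A)=k$, write $A=P_1\cdots P_k$ with primes ordered by degree, and assign each $P_i$ to the cell $D_{e_i}$ from the dyadic partition of Section \ref{Section3}. Since $\lambda_j\asymp 2^j$, $\sum_{P\in D_j}1/|P|\leq\log 2$, and $\deg P_k\leq d$, the ordering forces $1\leq e_1\leq\cdots\leq e_k\leq v+K$.

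To handle $\deg A\geq m$, insert $\mathbf{1}_{\deg A\geq m}\leq e^{(\deg A-m)/d}$; this extracts the factor $e^{-m/d}$ and distributes $e^{\deg A/d}=\prod_i e^{\deg P_i/d}$ across the primes. Since $\deg P\leq\lambda_{e_i}\ll d$ for every $P\in D_{e_i}$, the per-cell weight satisfies $\sum_{P\in D_{e_i}}e^{\deg P/d}/|P|\ll\log 2$. To bound $L(A)$, Lemma \ref{Upboundlem1}(3) gives
\begin{equation*}
L(A)\leq 2^{k-j}\deg(P_1\cdots P_j)\ll 2^{k-j}\sum_{i=1}^{j}2^{e_i}
\end{equation*}
for every $0\leq j\leq k$, together with the trivial bound $L(A)\leq 2^k$ from Lemma \ref{Upboundlem1}(1). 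Taking the minimum over $j$ for each tuple $(e_i)$, factoring the prime sums across cells, and dividing by $k!$ for the ordering yields
\begin{equation*}
T_k(d,m)\ll \frac{e^{-m/d}\,(2\log 2)^k}{k!}\,\Sigma,\quad\text{where}\quad \Sigma=\sum_{1\leq e_1\leq\cdots\leq e_k\leq v+K}\min_{0\leq j\leq k}\left(1,\,2^{-j}\sum_{i=1}^{j}2^{e_i}\right).
\end{equation*}

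The sum $\Sigma$ is purely combinatorial, depending only on the tuple $(e_i)$ and the parameter $v$, and is exactly the one analyzed in Ford's proof of Lemma 4.1 of \cite{F2}. Ford's estimate $\Sigma\ll v^k(1+|v-k|^2)/((k+1)(2^{k-v}+1))$, together with $v\log 2\asymp \log d$, yields the claimed bound. The main obstacle is faithfully reproducing Ford's estimate for $\Sigma$: the argument requires a careful case split between $k\leq v$—where the tuples spread across the cells and the $(v-k)^2$ correction arises from optimizing $j$—and $k>v$, where pigeonhole forces many primes into the smallest cells and produces the additional exponential saving $2^{-(k-v)}$. Since this is a purely combinatorial statement about integer tuples with no arithmetic content, it transfers to the function field setting without modification.
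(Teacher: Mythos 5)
Your outline follows the same route as the paper --- order the primes of a squarefree $A$, assign them to dyadic cells, bound $L(A)$ by Lemma \ref{Upboundlem1}(3), and reduce to Ford's combinatorial estimate --- but there is a genuine gap in the key per-cell estimate, and a secondary normalization error.

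The main problem is your claim that $\sum_{P\in D_{e_i}} e^{\deg P/d}/|P|\ll\log 2$ with the cells $D_j$ defined in Section \ref{Section3} by $\sum_{P\in D_j}1/|P|\le\log 2$. That bound is true, but the implied constant is genuinely larger than $1$: for a cell $D_j$ with $\lambda_j\asymp d$, the twist $e^{\deg P/d}$ is of size $\asymp e$, so $\sum_{P\in D_j}e^{\deg P/d}/|P|$ can be as large as $e\log 2$. Because this quantity is raised to the power $b_j$ and multiplied across cells, the end result is $T_k\ll e^{-m/d}(2c\log d)^k\,(\cdots)$ for some constant $c>1$, not $(2\log d)^k$. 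This is not an absorbable constant: since $k$ can be of size $\asymp v\asymp\log_2 d$, the factor $c^k$ is a genuine power of $d$, and plugging the weakened bound into Lemma \ref{Tlem} ruins the exponent in the final $d^{2-\delta}$ and hence the $b^{-\delta}$ in Theorem \ref{MainThm2}. The paper avoids this by \emph{redefining} the partition for this proof: it chooses new cells $E_j$ so that $\sum_{P\in E_j}|P|^{-(1-1/\log(q)d)}\le\log 2$, i.e.\ so that the \emph{twisted} weight, not the untwisted one, is $\le\log 2$ per cell. (Since $|P|^{1/\log(q)d}=e^{\deg P/d}$, this is exactly the weight you are working with.) The twisted partition still has $\asymp v$ cells and still satisfies $\deg P\le 2^{j+K}$ for $P\in E_j$, so the rest of the argument, including Lemma \ref{Upboundlem1}(3) and Ford's combinatorial bound, goes through unchanged --- but the exact $\log 2$ per cell is essential and you cannot inherit it from the Section \ref{Section3} cells.

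A secondary issue is the normalization $1/k!$: when you fix a sorted cell type $\mathbf{e}$ and factor the prime sum across cells, each squarefree $A$ with that type is overcounted by $\prod_j b_j!$ (permutations within cells), not by $k!$. Since $\prod_j b_j!\le k!$, replacing $1/\prod_j b_j!$ by $1/k!$ undershoots and does not give a valid upper bound. The paper handles this by expressing $\log(2)^k/\prod_j b_j!$ as $((v+K)\log 2)^k\int_{R(\mathbf{j})}1\,d\xi$ and then applying Ford's bound to $\sum_{\mathbf{j}}F(\mathbf{j})\int_{R(\mathbf{j})}1\,d\xi$; it also uses the hypothesis $k\le 10v$ at this point to bound $(v+K)^k\le e^{10K}v^k$, a step you do not mention but which is needed to pass from $(v+K)\log 2$ to $\log d$.
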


\begin{proof}
Firstly,
$$T_k(d,m) \leq \sum_{\substack{\deg(P^+(A))\leq d \\ \deg(A)\geq m, \omega(A)=k}} \frac{L(A)}{|A|} \leq e^{-m/d} \sum_{\substack{\deg(P^+(A))\leq d \\  \omega(A)=k}} \frac{L(A)}{|A|^{1-1/\log(q) d}}$$
Now, by \eqref{inverseprime2} we get
$$\sum_{\deg(P)\leq d} \frac{1}{|P|^{1-1/\log(q)d}}  = \log(d) + O(1).$$
Therefore, we can partition the interval $[1,d]$ into subintervals $E_0, \dots, E_{v+K-1}$ (for some constant $K$) such that for all $j$, $E_j$ is the next largest interval such that
$$\sum_{\substack{P\in M_e\\e\in E_j}} \frac{1}{|P|^{1-1/\log(q) d}} \leq \log(2)$$
Consequently, $P\in E_j$ implies that $\deg(P)\leq 2^{j+K}$.

Now, let $A=P_1\cdots P_k$ with $\deg(P_1)\leq \dots \leq \deg(P_k)\leq d$. Let $j_i$ be such that $P_i\in E_{j_i}$. Then Lemma \ref{Upboundlem1} says
$$L(A) \leq \min_{0\leq t \leq k} 2^{k-t} \deg(P_1\cdots P_t) \leq 2^{k+K} \min_{0\leq t \leq k} 2^{-t} \sum_{i=1}^t 2^{j_i}$$
Therefore, if we define
$$F(\mathbf{j}) := \min_{0\leq t \leq k} 2^{-t} \sum_{i=1}^t 2^{j_i}$$
then
$$T_k(d,m) \leq q^{-m/d}  2^{k+K} \sum_{\mathbf{j}\in J} F(\mathbf{j}) \sum_{\substack{P_1,\dots,P_k  \\ P_i\in E_{j_i}}} \frac{1}{|P_1\dots P_k|^{1-1/\log(q) d}} $$
where $J$ is the set of all vectors $\mathbf{j}$ such that $j_1\leq \dots \leq j_k \leq v+K-1$.

Fix a $\mathbf{j} = (j_1,\dots, j_k)$. For each $0\leq j\leq v+K+1$, let $b_j$ be the number of $i$ such that $j_i=j$. Then the inner sum of $P_1,\dots,P_k$ will be less than
\begin{align*}
\prod_{j=1}^{v+K-1} \frac{1}{b_j!} \left(\sum_{P\in E_j} \frac{1}{|P|^{1-1/\log(q) d}}\right)^{b_j} & \leq  \frac{\log(2)^k}{b_0!\cdots b_{v+K-1}!}\\
& = ((v+K)\log(2))^k \int_{R(\mathbf{j})} 1 d\mathbf{\xi}\\
& \leq e^{10K} (v\log(2))^k \int_{R(\mathbf{j})} 1 d\mathbf{\xi}
\end{align*}
where
$$R(\mathbf{j}) = \{0\leq \xi_1\leq \dots \leq \xi_k \leq 1: j_i \leq (v+K)\xi_i \leq j_i+1 \forall i\}$$
and the last inequality uses the hypothesis that $k \leq 10v$.

Finally, Ford in \cite{F2} shows that
$$\sum_{\mathbf{j}\in J} F(\mathbf{j}) \int_{R(\mathbf{j})} 1 d\mathbf{\xi} \ll \frac{1+|v-k|^2}{(k+1)!(2^{k-v}+1)}$$
and the lemma follows.
\end{proof}

\begin{lem}\label{Tlem}

$$T(d,m) \ll e^{-m/d} \frac{d^{2-\delta}}{\log(d)^{3/2}}$$

\end{lem}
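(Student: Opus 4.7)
The plan is to write $T(d,m) = \sum_{k\geq 1} T_k(d,m)$, apply Lemma \ref{Tklem} on the principal range $1\leq k\leq 10v$ (with $v=\lfloor\log_2(d)\rfloor$), and bound the tail $k>10v$ crudely. The main contribution should come from $k$ close to $v$, and the target bound $d^{2-\delta}/(\log d)^{3/2}$ should match $(2\log d)^v/(v+1)!$ after applying Stirling's formula.

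For the tail, I would use the crude bound $L(A)\leq \deg(A)\leq kd$ together with the Mertens-type estimate
$$\sum_{\substack{\deg(P^+(A))\leq d\\ \omega(A)=k,\,\mu^2(A)=1}} \frac{1}{|A|} \leq \frac{1}{k!}\Bigl(\sum_{\deg(P)\leq d}\frac{1}{|P|}\Bigr)^k \leq \frac{(\log d+O(1))^k}{k!},$$
so that $T_k(d,m) \ll kd\,(e\log d/k)^k$. For $k\geq 10v$ the base $e\log d/k\leq e(\log 2)/10<1/2$, and the tail is super-polynomially small in $d$, hence negligible compared with the target.

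For the main range, Lemma \ref{Tklem} gives
$$\sum_{k=1}^{10v} T_k(d,m) \ll e^{-m/d}\sum_{k=1}^{10v}\frac{(2\log d)^k\bigl(1+(v-k)^2\bigr)}{(k+1)!\bigl(2^{k-v}+1\bigr)},$$
and I would argue that this sum is dominated by its value at $k=v$. For $k\leq v$ the factor $2^{k-v}+1$ is $O(1)$, and the ratio of consecutive terms $(2\log d)/(k+2)$ is $\approx 2\log 2>1$ near $k=v$, so moving one step down shrinks the summand by a factor $\approx 1/(2\log 2)<1$ — geometric decay as $k$ decreases. For $k>v$, the denominator $2^{k-v}$ turns the same growth ratio into geometric decay by a factor $\approx \log 2<1$. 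The polynomial weight $1+(v-k)^2$ only contributes a constant when summed against a geometric series. Therefore the total is $\ll (2\log d)^v/(v+1)!$.

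The main obstacle, and the key calibration, is the closed-form identification of $(2\log d)^v/(v+1)!$. By Stirling, $(v+1)! \asymp v^{v+3/2}e^{-v}$, so
$$\frac{(2\log d)^v}{(v+1)!} \asymp \frac{(2e\log d/v)^v}{v^{3/2}}.$$
Plugging in $v=\log d/\log 2 + O(1)$ gives $2e\log d/v\sim 2e\log 2$, and
$$v\log(2e\log 2) = \frac{\log d}{\log 2}\bigl(\log 2+1+\log\log 2\bigr) = (2-\delta)\log d,$$
using $\delta = 1-(1+\log\log 2)/\log 2$. Hence $(2\log d)^v/(v+1)! \asymp d^{2-\delta}/(\log d)^{3/2}$, and combining with the tail estimate yields
$$T(d,m) \ll e^{-m/d}\,\frac{d^{2-\delta}}{(\log d)^{3/2}},$$
as desired.
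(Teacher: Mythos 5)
Your decomposition $T(d,m)=\sum_k T_k(d,m)$ and your treatment of the principal range $1\leq k\leq 10v$ are correct, and in fact slightly cleaner than the paper's: you argue directly that the weighted sum from Lemma~\ref{Tklem} is dominated by the term $k=v$ with geometric decay on both sides (ratio $\approx 1/(2\log 2)$ below $v$, ratio $\approx \log 2$ above $v$), whereas the paper splits $[1,10v]$ into $[1,v]$ and $[v,10v]$ and handles them separately. The Stirling calibration $(2\log d)^v/(v+1)!\asymp d^{2-\delta}/(\log d)^{3/2}$ is also right.

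The gap is in the tail. Your crude bound $T_k(d,m)\ll kd\,(e\log d/k)^k$ drops the constraint $\deg(A)\geq m$ entirely, so your tail estimate carries no factor of $e^{-m/d}$. You then summarize the tail as ``super-polynomially small in $d$,'' but the \emph{target} is $e^{-m/d}\,d^{2-\delta}/(\log d)^{3/2}$, and in the application this lemma is invoked as $T(e^g,d/2)$ with $g$ as small as $O(1)$, i.e.\ with $m/d$ as large as a constant times $d$. When $m\gg d\log d$, the target $e^{-m/d}d^{2-\delta}/(\log d)^{3/2}$ is itself far smaller than any fixed negative power of $d$, so a tail bound of the form $d^{-C}$ does not suffice; indeed, without the $e^{-m/d}$ factor the resulting bound on $S(d)$ degenerates to $O(1)$ rather than $O(d^{-\delta}(\log d)^{-3/2})$. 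The paper avoids this with Rankin's trick at the start of the tail estimate: since $\deg(A)\geq m$, one writes $1/|A|\leq e^{-m/d}\cdot|A|^{-(1-1/(d\log q))}$ and only then applies the Mertens-type bound, so the $e^{-m/d}$ comes along for free. (You could alternatively observe that $T_k(d,m)=0$ whenever $kd<m$, since a squarefree $A$ with $\omega(A)=k$ and $\deg(P^+(A))\leq d$ has $\deg(A)\leq kd$; starting the tail from $k\geq\max(10v,m/d)$ also recovers exponential decay in $m/d$. But as written your argument starts at $k=10v$ regardless of $m$, which is not enough.) Incidentally, the paper uses $L(A)\leq\tau(A)=2^k$ here rather than your $L(A)\leq\deg(A)\leq kd$; both come from Lemma~\ref{Upboundlem1}(1), but the $2^k$ choice sits more naturally next to the Rankin weight.
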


\begin{proof}

We clearly have
$$T(d,m) = \sum_k T_k(d,m)$$

Then if $v=\lfloor \log_2(d) \rfloor$, Lemma \ref{Tklem} says that
$$\sum_{v \leq k \leq 10v} T_k(d,m) \ll e^{-m/d} \sum_{v \leq k \leq 10v} \frac{1+(k-v)^2}{2^{k-v}} \frac{(2\log(d))^k}{(k+1)!} \ll e^{-m/d} \frac{(2\log(d))^v}{(v+1)!}.$$
For $1\leq k \leq v$, we have
\begin{align*}
\sum_{1 \leq k \leq v} T_k(d,m) & \ll 2^vq^{-m/d} \sum_{1 \leq k \leq v}  \frac{(1+(v-k)^2)(\log(d))^k}{(k+1)!} \\
& = e^{-m/d}(2\log(d))^v\sum_{0\leq k \leq v-1} \frac{1+k^2}{\log(d)^k (v-k+1)!} \\
& \ll e^{-m/d} \frac{(2\log(d))^v}{(v+1)!} \sum_{0\leq k \leq v-1} (1+k^2) \left(\frac{v+1}{\log(d)}\right) \cdots \left(\frac{v-k+1}{\log(d)}\right) \\
& \ll e^{-m/d} \frac{(2\log(d))^v}{(v+1)!} \sum_{0 \leq k \leq v-1}\frac{1+k^2}{\log(2)^k} \\
& \ll  e^{-m/d} \frac{(2\log(d))^v}{(v+1)!},
\end{align*}
where the second last inequality comes from the fact that $v-j \leq \log_2(d)$ for all $j$.

For $k\geq 10v$, we use the Lemma \ref{Upboundlem1} and the definition of $T_k(d,m)$ to get
\begin{align*} \sum_{k\geq 10v} T_k(d,m) & = \sum_{k \geq 10v} \sum_{\substack{\deg(P^+(A))\leq d \\ \deg(A)\geq m, \mu^2(A)=1 \\ \omega(A)=k}} \frac{L(A)}{|A|} \ll e^{-m/d}\sum_{k \geq 10v} 2^k \sum_{\substack{\deg(P^+(A))\leq d \\ \omega(A)=k, \mu^2(A)=1}} \frac{1}{|A|^{1-1/d}}\\
&  \ll e^{-m/d}\sum_{k \geq 10v} \frac{2^k}{k!} \left(\sum_{\deg(P)\leq d} \frac{1}{|P|^{1-1/d}}\right)^k \ll e^{-m/d} \sum_{k \geq 10v} \frac{2^k}{k!} \left(\log(d)+O(1)\right)^k\\
& \ll e^{-m/d} \frac{(2\log(d))^{10v}}{(10v)!} \ll e^{-m/d} \frac{(2\log(d))^{v}}{(v+1)!}
\end{align*}

Finally, we using Stirling's bound we get the desired result.

\end{proof}

Hence,
\begin{align*}
S(d) & \ll \frac{T(d,1)}{d^2} + \sum_{g=1}^{\log(\epsilon d)} e^{-2g}T(e^g,d/2) \\
& \ll \frac{q^{-1/d}}{d^{\delta}(\log(d))^{3/2}} + \sum_{g=1}^{\log(\epsilon d)} \frac{1}{q^{d/2e^g}e^{\delta g} g^{3/2} }\\
& \ll \frac{1}{d^\delta(\log(d))^{3/2}}
\end{align*}
and as long as we assume that $b\leq n/2$, then
\begin{align*}
|H^*(n,b)| & \ll q^n(S(b)+S(n-b))\\
& \ll q^n \left(\frac{1}{b^\delta(\log(b))^{3/2}} + \frac{1}{(n-b)^\delta(\log(n-b))^{3/2}}\right) \\
& \ll \frac{q^n}{b^\delta(\log(b))^{3/2}}
\end{align*}

It remains now to deduce the correct upper bound from the square-free case.

\begin{lem}
$$|H(n,b)| \ll \frac{q^n}{b^\delta(\log(b))^{3/2}}.$$
\end{lem}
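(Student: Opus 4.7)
The plan is to reduce to the squarefree case (already established earlier in this section) by isolating the squarefull part of $F$. Every $F \in \M_n$ has a unique decomposition $F = uv$ with $u$ squarefree, $v$ squarefull (every prime of $v$ occurring with exponent at least $2$; here $v=1$ is allowed), and $\gcd(u,v) = 1$. Since $u$ and $v$ are coprime, every divisor of $F$ factors uniquely as a divisor of $u$ times a divisor of $v$, so $F \in H(n,b)$ if and only if there exists $d \in \Ll(v) \cap [0,b]$ with $u \in H^*(n-\deg v,\, b-d)$. Taking a union bound over such $d$ gives
$$|H(n,b)| \leq \sum_{v \text{ squarefull}} \sum_{\substack{d \in \Ll(v) \\ 0 \leq d \leq b}} |H^*(n-\deg v,\, b-d)|.$$

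To estimate this I would use three inputs: (i) the squarefree case, combined with the symmetry $H^*(m,c) = H^*(m,m-c)$, to obtain $|H^*(m,c)| \ll q^m/(\min(c,m-c)^\delta (\log\min(c,m-c))^{3/2})$; (ii) the fact that the number of squarefull polynomials of degree $k \geq 2$ is $O(q^{k/2})$, which follows from writing each squarefull $v$ as $a^2 b^3$; and (iii) the trivial inequality $L(v) \leq \deg v + 1$ coming from $\Ll(v) \subseteq \{0,1,\dots,\deg v\}$.

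The term $v=1$ produces $|H^*(n,b)| \ll q^n/(b^\delta(\log b)^{3/2})$, which is the desired main term. For $v$ of degree $k \geq 2$ I would split into two ranges. When $2 \leq k \leq b/2$: the hypothesis $b \leq n/2$ gives, for every $d \in [0,k]$, both $b - d \geq b/2$ and $n - k - b + d \geq b - k \geq b/2$, so $\min(b-d,\, n-k-b+d) \geq b/2$; input (i) then bounds each summand by $O(q^{n-k}/(b^\delta(\log b)^{3/2}))$, and summing through (ii) and (iii) the total is at most $(q^n/(b^\delta(\log b)^{3/2})) \sum_{k \geq 2}(k+1)q^{-k/2} = O(q^{-1}) \cdot q^n/(b^\delta(\log b)^{3/2})$. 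When $k > b/2$: the trivial bound $|H^*| \leq q^{n-k}$ combined with (ii) and (iii) gives a total of order $\sum_{k > b/2}(k+1)q^{n-k/2}$, which decays exponentially in $b$ and is therefore negligible compared to the main term.

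The only delicate point is the middle-range case analysis: verifying that $\min(b-d,\, n-k-b+d) \geq b/2$ uniformly for $d \in \Ll(v)$ when $2 \leq k \leq b/2$, which is precisely where the assumption $b \leq n/2$ is used. Once this is in place, the proof is completed by a routine summation of the convergent geometric series.
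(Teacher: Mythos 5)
Your proof is correct, and it takes a mildly different route from the paper's. Both proofs reduce to the squarefree case $H^*$ via the canonical coprime factorization $F = uv$ (squarefree times squarefull), but the bookkeeping differs. The paper splits according to whether $\deg(F'')$ exceeds $(4+\epsilon)\log b$, disposing of the tail via the Dirichlet-series bound $\sum_{\deg F'' \geq C,\, F'' \text{ squarefull}} 1/|F''| \ll q^{-(1/2-\epsilon)C}$ and of the small-degree range by summing over all $D\mid F''$ and invoking $\sum_{F''} \tau(F'')/|F''| = O(1)$. You split at $\deg v \leq b/2$ (a far more generous threshold), replace the divisor sum $\tau(F'')$ by the coarser but adequate $|\Ll(v)\cap[0,b]| \leq L(v) \leq \deg v + 1$, and replace both Dirichlet-series estimates by the single elementary count of $O(q^{k/2})$ squarefull polynomials of degree $k$. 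The resulting geometric series $\sum_{k\geq 2}(k+1)q^{-k/2}$ handles both ranges at once: it is $O(1)$ uniformly in $q\geq 2$ (your statement ``$=O(q^{-1})$'' is technically valid with a universal implied constant but a bit of an overclaim for $q=2$, where the series already contributes a constant near $9$; in any case $O(1)$ is all that is needed), and its tail beyond $k > b/2$ decays exponentially in $b$. The case analysis ensuring $\min(b-d, n-k-b+d) \geq b/2$ under $b\leq n/2$ and $2\leq k\leq b/2$ is correct, and your use of the symmetry $H^*(m,c)=H^*(m,m-c)$ is the right way to make the squarefree bound applicable without worrying about which of $c$, $m-c$ is smaller. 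So you obtain the same conclusion with a somewhat more elementary input, at the cost of a wider splitting threshold; the paper's version is a little tighter but leans on the two appendix estimates \eqref{sqfull1} and \eqref{sqfull2}, which yours avoids.
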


\begin{proof}

Write $F=F'F''$ where $F'$ is square-free, $F''$ is square-full and $(F',F'')=1$. The number of $F$ with $\deg(F'')\geq (4+\epsilon)\log(b)$ will be less than
$$ q^n \sum_{\substack{F'' \mbox{ square-full} \\ deg(F'')\geq (4+\epsilon)\log(b)}}\frac{1}{|F''|} \ll \frac{q^n}{b^2}$$
by \eqref{sqfull1}

Now, suppose $\deg(F'')\leq (4+\epsilon)\log(b)$, then there is a $D|F''$ such that $F'$ has a divisor of degree $b-\deg(D)$. Thus
\begin{align*}
|H(n,b)| & \leq \sum_{\substack{F'' \mbox{ square-full} \\ \deg(F'')\leq (4+\epsilon)\log(b)}} \sum_{D|F''} |H^*(n-\deg(F''), b-\deg(D))| + O\left(\frac{q^n}{b^2}\right) \\
& \ll q^n \sum_{\substack{F'' \mbox{ square-full} \\ \deg(F'')\leq (4+\epsilon)\log(b)}} \sum_{D|F''} \frac{1}{|F''| (b-\deg(D))^{\delta} (\log(b-\deg(D)))^{3/2}} +  O\left(\frac{q^n}{b^2}\right)\\
& \ll \frac{q^n}{b^\delta(\log(b))^{3/2}} \sum_{\substack{F'' \mbox{ square-full} \\ \deg(F'')\leq (4+\epsilon)\log(b)}} \frac{\tau(F'')}{|F''|} +  O\left(\frac{q^n}{b^2}\right)\\
& \ll\frac{q^n}{b^\delta(\log(b))^{3/2}},
\end{align*}
where the last inequality is due to \eqref{sqfull2}.

\end{proof}

\appendix

\section{Estimates on Polynomials}\label{AppA}

In the whole appendix we will frequently use the prime polynomial theorem:
$$\pi_q(n) := |\{P\in \M_n : P \mbox{ is a prime polynomial}\}| = \frac{q^n}{n} + O\left(\frac{q^{n/2}}{n}\right).$$

\subsection{Rough Polynomials}

In this section we prove the following result:
\begin{align}\label{smooth}
|\{F\in \M_n : \deg(P^-(F))\geq d\}| \asymp \frac{q^n}{d}, \quad \quad (d\leq n)
\end{align}
as $q^n\to\infty$ where $P^-(F)$ denotes the smallest prime divisor of $F$.

Consider the generating series
$$\sum_{\substack{F \\ \deg(P^-(F)\geq d}} \frac{1}{|F|^s} = \prod_{\deg(P)\geq d} \left(1-\frac{1}{|P|^s}\right)^{-1} = \zeta_q(s) \prod_{\deg(P)<d} \left(1-\frac{1}{|P|^s}\right).$$
Hence, standard analytic tools show that
$$\sum_{\substack{F\in \M_n \\ \deg(P^-(F))\geq d}} 1 = q^n \prod_{\deg(P)<d}\left(1-\frac{1}{|P|}\right) + O\left(q^{(1/2+\epsilon)n}\right).$$
Finally,
\begin{align*}
\log \prod_{\deg(P)< d} \left(1-\frac{1}{|P|}\right)& = \sum_{\deg(P)< d} \log \left(1-\frac{1}{|P|}\right) = -\sum_{k=1}^{\infty} \sum_{\deg(P)< d} \frac{1}{|P|^k} \\
& = -\sum_{k=1}^{\infty} \sum_{m\leq d} \frac{\pi_q(m)}{q^{mk}} \\
& = -\sum_{m\leq d} \frac{1}{m} \sum_{k=0}^{\infty} \frac{1}{q^{km}} + O\left(\sum_{m\leq d} \frac{q^{m/2}}{m} \sum_{k=1}^{\infty} \frac{1}{q^{mk}} \right)\\
& = -\sum_{m\leq d} \frac{1}{m} - \sum_{m\leq d} \frac{1}{m(q^m-1)} + O\left(\sum_{m\leq d} \frac{1}{mq^{m/2}}\right)\\
& = \log(1/d) + O(1)
\end{align*}
where the constant in the term $O(1)$ is independent of $q$.

\subsection{Sum of Inverse Prime Polynomials}

In this section we prove that
\begin{align} \label{inverseprime}
\sum_{d_1\leq\deg(P)\leq d_2} \frac{1}{|P|} = \log(d_2)-\log(d_1) + O\left(\frac{1}{d_1}\right).
\end{align}
and
\begin{align} \label{inverseprime2}
\sum_{\deg(P)\leq d} \frac{1}{|P|^{1-1/\log(q)d}} = \log(d)+O(1).
\end{align}
where the implied constants are independent of $q$

Applying the prime polynomial theorem, we get
\begin{align*}
\sum_{d_1\leq\deg(P)\leq d_2} \frac{1}{|P|} = \sum_{n=d_1}^{d_2} \frac{\pi_q(n)}{q^n} = \sum_{n=d_1}^{d_2} \left(\frac{1}{n} + O\left(\frac{1}{nq^{n/2}}\right)\right) = \log(d_2)-\log(d_1) + O\left(\frac{1}{d_1}\right).
\end{align*}

Further, since $\deg(P)\leq d$, we get that $|P|^{1/d\log(q)} = e^{\deg(P)/d} = 1+O\left(\deg(P)/d\right)$. Hence,
\begin{align*}
\sum_{\deg(P)\leq d} \frac{1}{|P|^{1-1/d\log(q)}} & = \sum_{\deg(P)\leq d} \frac{1}{|P|} + O\left(\frac{1}{d}\sum_{\deg(P)\leq d} \frac{\deg(P)}{|P|}\right) \\
 & = \log(d)+O(1) + O\left(\frac{1}{d} \sum_{n\leq d} \frac{n\pi_q(n)}{q^n}\right)\\
 & = \log(d)+O(1).
\end{align*}

\subsection{Sum of Squarefull Polynomials}

In this section we prove that
\begin{align}\label{sqfull1}
\sum_{\substack{F \mbox{ square-full} \\ \deg(F)\geq C}} \frac{1}{|F|} \ll q^{-(1/2-\epsilon)C}
\end{align}
and
\begin{align}\label{sqfull2}
\sum_{F \mbox{ square-full}} \frac{\tau(F)}{|F|} = O(1),
\end{align}
where the implied constants are independent of $q$.

We have the identity
\begin{align*}
\sum_{F \mbox{ square-full}} \frac{1}{|F|^s} & = \prod_{P} \left(1 + \frac{1}{|P|^{2s}}+\frac{1}{|P|^{3s}}+\cdots \right)\\
& = \prod_{P} \left(\frac{1-1/|P|^{6s}}{(1-1/|P|^{2s})(1-1/|P|^{3s})}\right)\\
& = \frac{\zeta_q(2s)\zeta_q(3s)}{\zeta_q(6s)}.
\end{align*}
So the generating series can be analytically continued to $\Re(s)>1/2$. Hence we have
$$|\{F \mbox{ square-full} : \deg(F)=n \}| \ll q^{(1/2+\epsilon)n}$$
and therefore
\begin{align*}
\sum_{\substack{F \mbox{ square-full} \\ \deg(F)\geq C}} \frac{1}{|F|} & = \sum_{n\geq C} \frac{|\{ F \mbox{ square-full} : \deg(F)=n  \}|}{q^n} \\
& \ll \sum_{n\geq C} q^{-(1/2-\epsilon)n} \ll q^{-(1/2-\epsilon)C}.
\end{align*}

Finally,
$$\sum_{F \mbox{ square-full}} \frac{\tau(F)}{|F|^s}  = \prod_{P} \left(1 + \frac{3}{|P|^{2s}}+\frac{4}{|P|^{3s}}+\cdots \right)$$
and so converges at $s=1$ and tends to $1$ in the $q$-limit.

\bibliography{Multfirstdraft}
\bibliographystyle{amsplain}

\end{document}